\author{Gangavarapu, Venkata Krishna Kishore}
\def\F{\mathbb{F}}
\def\Z{\mathbb{Z}}
\newcommand{\CalA}{\mathcal{A}}
\newcommand{\CalO}{\mathcal{O}}
\theoremstyle{plain}
\newtheorem{thm}{Theorem}[section]
\newtheorem{cor}[thm]{Corollary}
\newtheorem{prop}[thm]{Proposition}
\newtheorem{lem}[thm]{Lemma}
\newtheorem{defn}[thm]{Definition}
\newtheorem{remark}[thm]{Remark}
\newtheorem*{claim*}{Claim}
\numberwithin{equation}{section}
\newcommand{\Gam}{\Gamma}
\newcommand{\gam}{\gamma}
\newcommand{\Hom}{\operatorname{Hom}}
\newcommand{\aut}{\operatorname{Aut}}
\newcommand{\Ker}{\operatorname{Ker}}
\newcommand{\ind}{\operatorname{ind}}
\newcommand{\abs}[1]{| #1 |}
\newcommand{\zp}{\Z_p}
\newcommand{\notty}{\mathfrak{N}}
\newcommand{\powerseries}{\F_p[\![t ]\!]}
\newcommand{\laurentseries}{\F_p(\!(t )\!)}
\newcommand{\maxi}{\maxideal}
\newcommand{\nmaxi}{\mathfrak{M}}
\newcommand{\npg}{U_1}
\newcommand{\maxideal}{\mathfrak{M}}
\newcommand{\zmodpnz}{\Z/p^n\Z}
\newcommand{\npgpower}[1]{U_{#1}}
\newcommand{\FZ}{\mathfrak{Z}}
\newcommand{\FM}{\mathfrak{M}}
\newcommand{\genbs}{\langle b^{(0)}, \ldots, b^{(n-1)} \rangle}
\newcommand{\pmodp}{\pmod{p}}
\newcommand{\pmodpp}{\pmod{p^2}}
\newcommand{\al}{\alpha}
\newcommand{\ub}[1]{{}_{#1}}
\begin{document}

\setcounter{page}{1}                                 %Put here the starting page number

\markboth {\hspace*{-9mm} \centerline{\footnotesize \sc
         % Put here the left page top label 
   $p^2$-torsion in Nottingham group}
                 }
                { \centerline                           {\footnotesize \sc  
                   Krishna Kishore                                               } \hspace*{-9mm}              
               }

\begin{center}
{ 
       {\Large \textbf { \sc 
    Torsion elements of the Nottingham group of order $p^2$ and type $\langle 2, m \rangle$
                               }
       }
\\

\medskip

{\sc Krishna Kishore \footnote{The author is supported by DST-SERB (Science and Engineering Research Board) national postdoctoral fellowship of the Government of India [SERB-PDF/2016/004056].}}\\
{\footnotesize 
Department of Mathematics,
Indian Institute of Science Education and Research,
Pune, India.\\}
{\footnotesize e-mail: {\it venkatakrishna@iiserpune.ac.in}}
}
\end{center}

\thispagestyle{empty}

\hrulefill

\begin{abstract}  
{\footnotesize \noindent We classify torsion elements of order $p^2$ and type $\langle 2, m \rangle$ in the Nottingham group defined over a prime field of characteristic $p >0$.
}
 \end{abstract}
 \hrulefill

{\small \textbf{Keywords: local class field theory, Nottingham group, torsion elements, ramification sequence} }

\indent {\small {\bf 2000 Mathematics Subject Classification: 11S31 (primary), 37P35, 20D15, 20E18 (secondary).} }

\section{Introduction}\label{intro}

Let $p$ be a prime number, and $\F_p$ the prime field of characteristic $p$. Let  $K := \laurentseries$ be the field of Laurent-series in one variable $t$ over $\F_p$,  and $\CalO_K := \powerseries$ its ring of integers. Let $U_1 := 1 + t \CalO_K $ be the group of principal units, and $U_k = 1 + t^k \CalO_K$, $k \geq 2$, be its higher unit  subgroups. Consider the group $\aut(K)$ of automorphisms of $K$, and the subgroup $\aut^1(K)$ consisting of \textit{wild} automorphisms of $K$, namely those which map the uniformizer $t$ to the product $tz$ for some $z \in U_1$. The set $\Set{t z | z \in U_1}$, corresponding to $\aut^1(K)$, equipped with composition of power series is a group, called the \textit{Nottingham group} $\notty$ over $\F_p$; see \cite{Ca}, \cite{Ca1}, \cite{Je}, \cite{DF}.

It is well-known that the Nottingham group $\notty$, equipped with profinite topology, contains every countably based pro-$p$-groups as a closed subgroup  \cite{Ca}, whence interest in its study, as it offers a good model to test conjectures on pro-$p$-groups. The classification of elements of order $p$ in the Nottingham group defined over any finite field $\kappa$ is due to Klopsch \cite{Kl}, according to which they are classified in two steps. First, according to the depth, which may be any positive integer $d$ prime to $p$; second, when $d$ is given, there are only $\abs{\kappa}-1$ conjugacy classes, described by the coefficient of the series in degree $d+1$. On the other hand, Lubin gave a different proof for the theorem of Klopsch using a different framework, namely that of associating characters $U_1 \to \Z/p^n \Z$ to conjugacy class of torsion elements in $\notty$ and order $p^n$. The setup allows one to compute torsion elements of order $p^n$ for any integer $n \geq 1$ \cite{Lu}. In this article we use this setup for the analysis of break sequences of strict equivalence classes. To state our main results, we now introduce some terminology. 

Consider the $\Z_p$-module $\Hom_{\Z_p}^{\textrm{cont}}(U_1, \Z/p^n \Z)$ of continuous characters $\chi: U_1 \to \Z/p^n \Z$ on which the group $\notty$ acts on the right in a manner compatible with $\Z_p$-module structure: 
$$
{}_w \chi(f(t)) := \chi( f \circ w(t)) 
$$
for $f(t) \in \npg = 1 + t \powerseries$ and $w \in \notty$.  Two characters $\chi, \psi$ are called strictly equivalent, denoted $\chi \simeq \psi$, if there exists a $u \in \notty$ such that $\psi = \ub{u} \chi$ and $u(t)/t \in \ker \chi$; if the former condition holds they are said to be weakly equivalent, and denoted $\chi \sim \psi$.  To any surjective character $\chi : U_1 \to \Z/p^n \Z$ there corresponds a sequence of integers $\langle b^{(0)}, \ldots b^{(n-1)} \rangle$, called \textit{the break sequence of $\chi$ or type of $\chi$}, where $b^{(j-1)}$ is the largest positive integer $b$ such that 
$$
\chi(1 + \maxi^{b+1}) \subset p^j \Z/p^n \Z \; \textrm{  but  } \; \chi(1 + \maxi^b) \not \subset p^j \Z/p^n \Z.
$$
Due to the work of Lubin \cite{Lu}, the conjugacy classes of torsion elements of order $p^2$ in $\notty$ correspond to strict equivalence classes of surjective continuous characters. Since two strictly equivalent characters have the same break sequence, one may pass onto the analysis of break sequences. In an unpublished work, Lubin classified torsion elements of order $p^2$ of break sequence type $\langle 1, m \rangle$ over \textit{any} finite field; see Theorem \ref{Lubin_<1,m>}.  The main goal of this article is to classify elements of order $p^2$ of type $\langle 2,m \rangle$ over any \textit{prime} finite field.

Let $E_i$ be a topological basis of the principal unit group $U_1 = 1 + \FM$, and $\FZ_i$ be the basis of $\Hom_{\Z_p}^{\textrm{cont}}(1 + \FM, \Z/p^2 \Z)$ dual to $E_i$. Let $\chi$ be a character of type $\langle 2, m \rangle$ with the following expansion:
\begin{equation}\label{intro_eq}
\chi = x_1 \FZ_1 + x_2 \FZ_2 + \sum_{\substack{1 \leq j \leq m \\ p \nmid j}} a_j. p \FZ_j,
\end{equation}
where all the coefficients are in $\Z/p^2 \Z$ which have representatives between $0$ and $p-1$ and $x_2, a_m$ are nonzero; see Lemma \ref{bs_lemma}. As it turns out, only some of these coefficients together with a relation among them determine the character up to strict equivalence.  Accordingly we collect these coefficients into \textit{indicator} of the character $\chi$, denoted $\ind \chi$, and defined based on $m \pmodp$, namely that $\ind \chi =  [x_2,\frac{x_1}{x_2} -  \frac{ a_{m-1}^p}{(m-1)x_2^p}], [ x_2, a_m, \frac{x_1}{x_2} -  \frac{ a_{m-1}}{(m-1)a_m}]$, resp. $[x_2, a_m]$ corresponding to $m \equiv 0 \pmodp$, $m \not \equiv 0,1 \pmodp$, and $m \equiv 1 \pmodp$ respectively. Now we can state some of our main results.

\begin{thm}\label{intro_my_main_thm}
Let $\chi, \psi$ be characters of order type $\langle 2, m \rangle$.  Then $\chi$ is weakly equivalent to $\psi$ if and only if  $\ind \psi = \ind \chi$.
\end{thm}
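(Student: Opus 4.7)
My plan is to work directly with the right action of $\notty$ on the $\Z_p$-module of continuous characters expressed in the dual basis $\FZ_i$, and to track how the coefficients $x_1$, $x_2$, $a_j$ in the expansion \eqref{intro_eq} transform under $\chi \mapsto {}_u\chi$ for $u\in \notty$. Since weak equivalence is by definition the orbit relation under this action, the theorem reduces to the claim that the $\notty$-orbits on characters of type $\langle 2,m\rangle$ are parametrized precisely by the indicator.

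For the forward implication, I would parametrize $u(t)=t\bigl(1+\sum_{k\geq 1}c_k t^k\bigr)$ and expand $E_i\circ u$ modulo a sufficiently deep term of the filtration $\{U_j\}$ to obtain explicit transformation formulas for $x_1', x_2', a_j'$ as polynomials in $x_1,x_2,a_j$ and the $c_k$. Only the $c_k$ with $k\leq m$ affect the coefficients relevant to the indicator, so invariance reduces to a finite polynomial check. The expected pattern is that $x_2$ is invariant modulo $p$ because composition by an element of $\notty$ preserves the leading break $b^{(0)}=2$; the pivot coefficient ($a_m$ when $p\nmid m$, or $a_{m-1}$ when $p\mid m$) is either invariant or transforms by a factor determined by lower data that the indicator already records; and the ``correction'' expression $\frac{x_1}{x_2}-\frac{a_{m-1}}{(m-1)a_m}$, or its $m\equiv 0$ analogue with $p$th powers, is precisely the obstruction to normalizing $x_1$ to zero by conjugation by a depth-one element $u=t+c_1t^2+\cdots$.

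For the converse, given $\chi,\psi$ with $\ind\chi=\ind\psi$, I would build $u$ taking $\chi$ to $\psi$ by solving for $c_1,c_2,\ldots$ one stage at a time: each new $c_k$ adjusts a single coefficient in the expansion of ${}_u\chi$, and the hypothesis that indicators agree is exactly what makes the finitely many ``pivot'' equations (those on which the obstruction concentrates) solvable. The three residue classes of $m\pmod p$ must be treated separately because whether the pivot coefficient is $a_m$ or $a_{m-1}$, and whether $m-1$ is invertible in $\F_p$, changes the shape of the normalizing equations and explains the three different formulas for $\ind\chi$; in the case $m\equiv 0\pmod p$ the $p$th powers reflect that the relevant substitution equation is a Frobenius-twisted linear one, arising because the pivot lies in the $p\FZ$-part of the character.

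The main technical obstacle will be the explicit computation of $E_i\circ u$ modulo $U_{m+1}$, which couples the composition of formal power series with the $\Z_p$-module structure on $U_1$ relative to the chosen topological basis $\{E_i\}$. The cleanest approach is to derive, once and for all, universal transformation formulas for the leading coefficients of ${}_u\chi$ up to depth $m$; after that, both directions become routine verifications of polynomial identities modulo $p$ and $p^2$. The most delicate bookkeeping is for the cross-terms $c_ic_j$ with $i+j$ close to $m$, and for the mixing between the ``$\Z/p^2$-part'' (contributions from $x_1,x_2$) and the ``$p\Z/p^2$-part'' (contributions from the $a_j$) under composition; it is precisely this mixing that forces the correction term in the indicator to couple $x_1$, $x_2$, and $a_{m-1}$.
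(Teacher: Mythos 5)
Your plan is essentially the paper's proof: necessity is obtained by evaluating ${}_u\chi$ at $E_1, E_2, E_{m-1}, E_m$ to derive explicit transformation formulas for the coefficients (the paper's Lemma \ref{coarse} and Lemma \ref{nec_lem}), and sufficiency by a top-down, one-index-at-a-time normalization composing with elements $t(1+\alpha t^{k})$, with the indicator hypothesis supplying solvability at the pivot step and the Frobenius twist appearing exactly where you predict in the $m\equiv 0\pmodp$ case. One small caution: the invariance of $x_2$ modulo $p$ does not follow from preservation of the break $b^{(0)}=2$ alone (that only gives $x_2\neq 0$); it requires the direct computation $E_2\circ u\equiv E_2\pmod{U_3}$, which your ``finite polynomial check'' would in any case produce.
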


The proof of the theorem is briefly as follows. We choose the basis $\Set{E_j := 1 + t^j}$, $j \geq 1$ and $p \nmid j$, of the principal unit group $U_1$ and correspondingly the canonical basis $\FZ_i$ dual to $E_j$. Necessity of the conditions follow immediately upon evaluating the character at various basis elements $E_j$. In order to prove the sufficiency condition, we use backwards induction and progressively build characters starting from $\chi$ and at each stage we find appropriate $u \in \notty$ such 
all the coefficients of $\ub{u} \chi$ and $\psi$ with index $k$ for all $j \leq k \leq m$ are equal. The difficulty is in finding appropriate $u$ that satisfies  this criterion, so by a careful analysis of the relations among the coefficients we obtain the desired $u \in \notty$.  As a consequence of Theorem \ref{intro_my_main_thm}, we obtain the main result of this article:

\begin{thm}\label{intro_final_thm}
Let $d_m$ denote the number of conjugacy classes of Nottingham elements of order $p^2$ and type $\langle 2, m \rangle$. Then
\[
d_m^{\textrm{weak}} \leq d_m \leq p d_m^{\textrm{weak}}
\]
where
\[
d_m^{\textrm{weak}} = 
\begin{cases}
p(p-1)	 & \; \textrm{if} \; m \equiv 0 \pmodp. \\
(p-1)^2	 & \; \textrm{if}   \; m \equiv  1 \pmodp.\\
p(p-1)^2	 & \; \textrm{otherwise}. \\
\end{cases}
\]  
\end{thm}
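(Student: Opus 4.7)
The plan is to deduce Theorem \ref{intro_final_thm} from Theorem \ref{intro_my_main_thm} in two stages. By Lubin's correspondence, $d_m$ is the number of strict equivalence classes of surjective characters of type $\langle 2, m \rangle$; the task thus reduces to first enumerating the weak equivalence classes directly from the indicators, and then bounding how finely strict equivalence refines weak equivalence inside a single weak class.

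By Theorem \ref{intro_my_main_thm} the number of weak equivalence classes equals the number of distinct indicators, which I would count by a case split on $m \pmod p$. For $m \equiv 0$, the indicator $[x_2,\ x_1/x_2 - a_{m-1}^p/((m-1) x_2^p)]$ has first slot free in $\F_p^\times$ and second slot free in $\F_p$ as $x_1$ varies, producing $p(p-1)$. For $m \equiv 1$, the indicator $[x_2, a_m]$ lies in $\F_p^\times \times \F_p^\times$, giving $(p-1)^2$. Otherwise $\ind \chi$ has two $\F_p^\times$-slots and a free $\F_p$-slot, giving $p(p-1)^2$. These are exactly the values of $d_m^{\text{weak}}$ claimed. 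The lower bound $d_m \geq d_m^{\text{weak}}$ is then formal, since strict equivalence refines weak equivalence, so each weak class contains at least one strict class.

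For the upper bound $d_m \leq p \cdot d_m^{\text{weak}}$ I would fix $\chi$ of type $\langle 2, m \rangle$ and analyze the refinement of its weak class. Writing elements as ${}_w \chi$ for $w \in \notty$, the cocycle identity $\chi((w_1 w_2)(t)/t) = \chi(w_1(w_2(t))/w_2(t)) + \chi(w_2(t)/t)$ together with the defining property of $\stab(\chi)$ shows that ${}_{w_1} \chi \simeq {}_{w_2} \chi$ if and only if $\chi(w_1(t)/t) \equiv \chi(w_2(t)/t) \pmod{I_\chi}$, where $I_\chi := \{\chi(s(t)/t) : s \in \stab(\chi)\}$ is a subgroup of $\Z/p^2\Z$ (the map $s \mapsto \chi(s(t)/t)$ is a homomorphism on $\stab(\chi)$ because the stabilizing condition absorbs the inner composition). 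Since $w \mapsto \chi(w(t)/t)$ is surjective onto $\Z/p^2\Z$ (any $f \in U_1$ arises as $w(t)/t$ and $\chi$ itself is surjective), the strict classes inside the weak class of $\chi$ are in bijection with $\Z/p^2\Z / I_\chi$, of size $1$, $p$, or $p^2$. The main obstacle is verifying $I_\chi \neq 0$: I would produce a nonzero element of $I_\chi$ in the order-$p$ subgroup $p\Z/p^2\Z$ by starting from the order-$p$ torsion element of $\notty$ attached to the quotient character $p\chi$ via the Lubin correspondence and correcting it within $\stab(\chi)$ so as to preserve a nonzero $\chi$-displacement. Verifying that such a correction exists — and does not annihilate the displacement — is where the explicit support structure of type $\langle 2, m\rangle$ characters established in the proof of Theorem \ref{intro_my_main_thm} must be invoked.
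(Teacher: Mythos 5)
Your count of the weak classes and the lower bound are correct and coincide with the paper's Corollary \ref{weak_corollary}: one counts indicators, using $x_2, a_m \in \F_p^{\times}$ and the fact that the last coordinate of the indicator ranges freely over $\F_p$. The upper bound is where your argument diverges from the paper's, and where there is a genuine gap. Your framework is sound as far as it goes: $s \mapsto \chi(s(t)/t)$ is indeed a homomorphism on $\stab(\chi)$, $I_\chi$ is a subgroup of $\Z/p^2\Z$, and the strict classes inside the weak class of $\chi$ are parametrized by $(\Z/p^2\Z)/I_\chi$. But the entire upper bound then rests on the non-vanishing $I_\chi \neq 0$, which you do not prove: you only sketch a construction via the order-$p$ torsion element attached to $p\chi$ and explicitly defer the verification to "the explicit support structure" of the characters. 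As submitted, $d_m \leq p\, d_m^{\textrm{weak}}$ is not established.

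The gap is fillable, and more easily than your sketch suggests: the required elements of $\stab(\chi)$ are already exhibited in the proof of Lemma \ref{lubin_lemma}. For $z \in U_{b^{(1)}}$ and $w = tz$ one has $\ub{w}\chi = \chi$ (because $U_{b^{(1)}+1} \subseteq \Ker\chi$), while $\chi(w(t)/t) = \chi(z)$ runs over all of $p\Z/p^2\Z$ as $z$ runs over $U_{b^{(1)}}$, by the definition of the last break. Hence $I_\chi \supseteq p\Z/p^2\Z$ and each weak class contains at most $p$ strict classes; no detour through the quotient character $p\chi$ is needed. Alternatively, you could invoke Lemma \ref{lubin_lemma} itself, which replaces the condition $u(t)/t \in \Ker\chi$ by the weaker $\chi(u(t)/t) \equiv 0 \pmod{p}$, so the displacement obstruction lives in $\Z/p\Z$ from the outset and the bound by $p$ is automatic. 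That is essentially the paper's route: it unpacks $\chi(u(t)/t) \bmod p$ into the explicit condition $\frac{x_1}{x_2}\alpha - \binom{\alpha}{2} + \beta \equiv 0$ defining $\notty(x_1,x_2)$, and Proposition \ref{new_addition} decomposes $\notty$ into the $p$ cosets $\notty(x_1,x_2)g(k)$, each mapping one weak class onto at most one strict class. Your quotient-by-$I_\chi$ formulation is a genuinely different and arguably cleaner bookkeeping device --- it would even determine $d_m$ exactly if $I_\chi$ were computed --- but the decisive non-vanishing statement must be supplied.
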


Finally, we refer the reader to an elegantly written article  about Nottingham group by Sautoy and Fesenko \cite{DF}. Also the reader may refer to an article of Chinburg and Symonds, where in, the authors have computed an element of order $4$ in the Nottingham group at the prime $2$ \cite{Ch}.

The outline of the article is as follows. In \S \ref{not} we explain the notation and conventions used in this article. In \S \ref{Lubin_article} we describe Lubin's framework which will be used later to classify torsion elements of order $p^2$; for more details of the approach, we recommend Lubin's work in \cite{Lu}. In \S \ref{top_str} we discuss the topological structure of the principal unit group $U_1$ and a lemma of Lubin. In \S \ref{str_char} we prove several computational lemmas that describe relations among the coefficients of the characters  that are essential to establish  Theorem \ref{intro_my_main_thm} and Theorem \ref{intro_final_thm} (same as Theorem \ref{my_main_thm} of \S \ref{main_section}, Theorem  \ref{final_thm} of \S \ref{classification_strict_classes} correspondingly). In \S \ref{fin_rem} we describe the limitations  of this article and sketch various possible generalities.

\section{Notation and conventions}\label{not}

We adapt the following notation in this article. The letter $p$ always denotes a prime number, and $\F_p$ the prime field of characteristic $p >0$. By $K$ we mean the field of Laurent-series  $\laurentseries$ over $\F_p$, and $\CalO_K := \powerseries$ its ring of integers with maximal ideal $\maxideal = t \CalO_K$ where $t$ is a fixed uniformizer of $\CalO_K$. Then the group of principal units $ 1 + \maxideal $ is denoted by $U_1$, and its higher unit  subgroups $1 + \maxideal^j$, $j \geq 1$, by $U_j$. 

The Nottingham group over $\F_p$ is always denoted by $\notty$. The notation for strict equivalence is $\simeq$, while that for weak equivalence is $\sim$. Thus $\chi \simeq \psi$ means that $\chi$ and $\psi$ are strictly equivalent, while $\chi \sim \psi$ means that they are weakly equivalent.

The notation ${n \choose k}$ for integers $n, k > 0$ is the usual combinatorial one. More importantly we write ${\alpha \choose k}$ for $\alpha \in \F_p$, to mean any ${n \choose k}$ for $n \equiv \alpha \pmodp.$

In this article, we deal \textit{only} with continuous characters $\chi : U_1 \to \Z/p^n \Z$, with $U_1$ equipped with induced topology from that of topological group $K^\times$, and $\Z/p^n \Z$ with the discrete topology.  So we omit the adjective `continuous' while referring to the characters in the $\zp$-module $\Hom_{\Z_p}^{\textrm{cont}}(\npg, \Z/p^n \Z)$. 

\section{Lubin's framework}\label{Lubin_article}

\newcommand{\normmap}{\mathcal{N}^K_F}
\newcommand{\normresidue}{\rho^K_F}
\newcommand{\boldchi}{\textbf{X}}
\newcommand{\nok}{\mathcal{O}}
\newcommand{\gal}{\textrm{Gal}}

In this section we describe Lubin's  framework to compute torsion elements of order $p^n$ (in the Nottingham group $\notty$); for more details the reader may consult \S 2 of \cite{Lu}.

First, we describe how to associate a torsion element $\gam \in \notty$ of order $p^n$ with a finite abelian extension $K \supset F$ with Galois group isomorphic to $\Z/p^n \Z$ (where, recall, $K = \laurentseries$ as in \S \ref{not}.) Given a torsion element $\gam \in \notty$ of order $p^n$, considered as an element of $\aut(K)$, let $F$ be its fixed field in $K$. Then $K \supset F$ is a finite abelian extension with Galois group $\Gam$ isomorphic to $\Z/p^n \Z$. By the local class-field theory we have an exact sequence 
$$
K^* \xrightarrow{\normmap} F^* \xrightarrow{\normresidue} \Gam \to 1
$$
such that the image of the norm map $\normmap$ is an open subgroup in $F^*$, and for any open subgroup $U$ of finite index in $F^*$ there is a \textit{canonical} abelian extension $L \subset F$ such that $\mathcal{N}^L_F(L^*) = U$. Thus, with the aid of norm residue symbol $\rho^K_F$ we can obtain a character $\boldchi_\gam : F^* \to \Z/p^n \Z$, via the canonical isomorphism $\Gamma \to \Z/p^n \Z$ sending the generator $\gam$ to $1$, and hence a character $\npg \to \Z/p^n \Z$ by restriction; the latter character on $\npg$ is again denoted by $\boldchi_\gam$ abusing notation.

Conversely, given a surjective continuous character $\boldchi: \npg \to \Z/p^n \Z$, we construct a cyclic extension $L \supset K$ and a generator $\gam$ of the  Galois group $\gal(L/K)$ of order $p^n$. Given such a character $\boldchi$, clearly it has a unique extension to $\CalO^*$ and then to $K^*$ by letting to be $0$ at $t$. Thus we obtain a continuous surjective character $\widetilde{\boldchi} : K^* \to \Z/p^n \Z$ with the kernel generated by $t, \ker(\boldchi), $ and $\F_p^*$ (the subgroup $\F_p^*$ is contained in the kernel because its order is $p-1$ which is relatively prime to $p^n$.) The kernel is clearly an open subgroup and hence, by the existence part of local class-field theory, there is a finite abelian extension $L \supset K$, unique up to isomorphism, with Galois group isomorphic to $\Z/p^n \Z$ such that the image of $L^*$ under the norm map $\mathcal{N}^{L}_{K}$ is $\ker(\widetilde{\boldchi})$. Furthermore, we have an exact sequence

\[
\xymatrix{
L^* \ar[r]^-{\mathcal{N}^L_K} & K^* \ar[r]^-{\rho^L_K} \ar[dr]^-{\widetilde{\boldchi}} & \gal(L/K) \ar[r] \ar@{.>}[d]
&1 \\
& & \Z/p^n \Z 
}
\]

The generator $\gam$ of $\gal(L/K)$ is obtained by $\gam := \rho^{L}_K \circ \widetilde{\boldchi}^{-1}(1)$. 

Let us note here that the description of $\gam \in \notty$ in terms of power series $u(t)$ depends on the choice of the uniformizer $t$, but, the conjugacy class of $u(t)$ in $\notty$ does not provided $\mathcal{N}^{L}_K(x) = t$ for a choice of uniformizer $x \in \CalO_L$ (which is used to construct back  $\widetilde{\boldchi}$.) 

\begin{remark}\label{rmk}
\begin{enumerate} 

\item 
Note that the extensions $K/F$ and $L/K$ are totally wildly ramified, whence the terminology `wild' automorphism associated to the elements of Nottingham group $\notty$.

\item 
Let us observe that given a torsion element $\gamma$ the construction yields a \textit{subfield} $F$ of $K$ such that Galois group $\gal(K/F)$ is cyclic of order $p^n$. On the other hand, given a character $\boldchi : U_1 \to \Z/p^n \Z$ the construction gives an \textit{extension} $L$ of $K$ such that $\gal(L/K)$ is cyclic of order $p^n$. 
\item 
In the terminology of Lubin \cite{Lu}, the procedure by which one obtains a torsion series $u(x)$ of order $p^n$ in the Nottingham group from a continuous surjective character $\npg \to \Z/p^n \Z$  is called the \textit{standard procedure}.
\end{enumerate}
\end{remark}

The Nottingham group $\notty$ acts on the group of characters $ \npg \to \Z/p^n \Z$ as follows. For $w \in \notty$ and $\chi$ such a character, one has
$$
{}_w \chi(f(t)) := \chi( f \circ w(t)) 
$$
for $f(t) \in \npg = 1 + t \powerseries$. Hence ${}_w({}_{w'} \chi) = {}_{ww'}\chi$. Now, we impose certain equivalence relation on these group of characters, which, as it turns out, characterizes  conjugacy classes of order $p^n$ elements in $\notty$.

\begin{defn}\label{secdefn}
Two characters $\chi, \psi : \npg \to \Z/p^n \Z$ are said to be \textit{strictly equivalent}, denoted $\chi \simeq \psi$, if there exists an element $w \in \notty$ such that $\psi = {}_w \chi$ and $w(t)/t \in \ker(\chi)$. They are said to \textit{weakly equivalent}, denoted $\chi \sim \psi$, if there exists an element $w \in \notty$ such that $\chi_2 = {}_w \chi_1$.
\end{defn}

As it turns out, strict equivalence is indeed an equivalence relation and is the \textit{right} relation, in contrast to weak equivalence, one needs to impose to characterize conjugacy classes of torsion elements in the Nottingham group: \cite[pp. 5]{Lu}:

\begin{thm}\label{main_thm}
\textrm{\bf (Lubin)} 
If two elements in $\notty$ are conjugate (in that group), then their corresponding characters are strictly equivalent. Conversely, if two characters are strictly equivalent continuous characters of $\npg$ with values in $\Z/p^n \Z$, the torsion power series that arise from them by the \textit{standard procedure} (Remark \ref{rmk}) are conjugate in $\notty$. 
\end{thm}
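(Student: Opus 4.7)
The plan is to establish both directions by exploiting the naturality of the local reciprocity map under automorphisms of $K$, together with careful bookkeeping of the uniformizer choices that enter the standard procedure. Recall that the forward map $\gamma \mapsto \chi_\gamma$ involves identifying the fixed field $F := K^{\langle \gamma \rangle}$ with $\laurentseries$ via a uniformizer, while the inverse procedure $\chi \mapsto \gamma_\chi$ involves choosing a uniformizer $\pi$ of the cyclic extension $L/K$ with $\mathcal{N}^L_K(\pi) = t$. The role of strict equivalence, as opposed to mere weak equivalence, is precisely to make these choices match on both sides.

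For the forward direction, suppose $\gamma_2 = w\gamma_1 w^{-1}$ with $w \in \notty$. Setting $F_i := K^{\langle \gamma_i \rangle}$, one has $F_2 = w(F_1)$, and $w|_{F_1}\colon F_1 \to F_2$ is a field isomorphism intertwining $\gamma_1$ and $\gamma_2$ in the sense that $\gamma_2 \circ w = w \circ \gamma_1$ on $K$. Functoriality of the norm residue map under this isomorphism, combined with the fact that conjugation by $w$ sends $\gamma_1$ to $\gamma_2$ (and both are normalized to $1 \in \Z/p^n\Z$), yields $\chi_{\gamma_2} \circ w = \chi_{\gamma_1}$ on $F_1^*$. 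Restricting to principal units and passing through the identifications $F_i \cong \laurentseries$ translates this into $\chi_{\gamma_2} = {}_w\chi_{\gamma_1}$ as characters on $\npg$. The normalization $w(t)/t \in \ker \chi_{\gamma_1}$ then follows because the extension $\widetilde{\chi}_{\gamma_1}$ to $K^*$ is designed to kill $t$, so $\widetilde{\chi}_{\gamma_1}(w(t)) = \widetilde{\chi}_{\gamma_1}(t) = 0$.

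For the converse, start with $\chi_2 = {}_w\chi_1$ and $w(t)/t \in \ker \chi_1$. The latter hypothesis is exactly what upgrades the identity $\chi_2 = \chi_1 \circ w$ from $\npg$ to an equality $\widetilde{\chi}_2 = \widetilde{\chi}_1 \circ w$ on all of $K^*$, because one needs $\widetilde{\chi}_1(w(t)) = 0 = \widetilde{\chi}_1(t)$. Hence $w \in \aut(K)$ carries $\ker \widetilde{\chi}_1$ onto $\ker \widetilde{\chi}_2$, and the existence and uniqueness parts of local class field theory imply that $w$ lifts to an isomorphism $\tilde w\colon L_1 \to L_2$ of the two cyclic extensions attached to $\chi_1$ and $\chi_2$. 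Under $\tilde w$, the generator $\gamma_1$ of $\gal(L_1/K)$ maps to $\gamma_2 \in \gal(L_2/K)$, and choosing uniformizers $\pi_i \in L_i$ with $\mathcal{N}^{L_i}_K(\pi_i) = t$ compatibly with $\tilde w$ realizes $\tilde w$ as an element of $\notty$ conjugating the power-series forms of $\gamma_1$ and $\gamma_2$.

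The main obstacle is the very last step of the converse: one has to verify that the lift $\tilde w$ can be chosen to intertwine not only the Galois generators but also the uniformizer normalizations $\pi_i$ that realize $\gal(L_i/K) \hookrightarrow \notty$. This is exactly where strict equivalence does real work beyond weak equivalence: the condition $w(t)/t \in \ker \chi_1$ lets one absorb the residual ambiguity in $\pi_i$, which a priori lies in a coset of $U_1(L_i)$, into $\tilde w$ itself, producing an honest conjugation in $\notty$ rather than a conjugation modulo some extra automorphism.
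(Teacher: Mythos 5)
First, a point of comparison: the paper does not prove Theorem \ref{main_thm} at all --- it is imported from Lubin's article \cite{Lu} (cited there at p.~5) and used as a black box. So there is no internal argument to measure your sketch against; what follows compares it with what a complete proof would have to contain. Your overall strategy --- naturality of the local reciprocity map under the automorphism $w$, plus bookkeeping of the uniformizer choices entering the standard procedure --- is the right one and is essentially Lubin's. But two steps are genuinely incomplete rather than merely compressed.

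(i) In the forward direction your justification of the strictness condition is circular: you write $\widetilde{\chi}_{\gamma_1}(w(t)) = \widetilde{\chi}_{\gamma_1}(t) = 0$, but the first equality \emph{is} the assertion $\chi_{\gamma_1}(w(t)/t)=0$ that you are trying to establish; moreover the extension $\widetilde{\chi}$ of a character to $K^*$ killing $t$ belongs to the converse construction, whereas in the forward direction the character lives on $F_1^*$ and the relevant uniformizer is one of $F_1$, not $t\in K$. The actual mechanism is that the identification $F_i \cong \laurentseries$ must be made with the uniformizer $\pi_i = \mathcal{N}^K_{F_i}(t)$; then $w(\pi_1) = \mathcal{N}^K_{F_2}(w(t))$, so the change-of-coordinate series compares $w(\pi_1)$ with $\pi_2$ and their ratio $\mathcal{N}^K_{F_2}(w(t)/t)$ lies in the norm group, i.e.\ in the kernel of the reciprocity map --- this is what forces the strictness condition, and it is invisible in your one-line argument. (ii) In the converse direction you correctly isolate the crux --- matching the normalizations $\mathcal{N}^{L_i}_K(\pi_i)=t$ through the lift $\tilde w$ --- and then assert without proof that $w(t)/t \in \ker\chi_1$ ``absorbs the residual ambiguity.'' That is precisely where the work lies: one must check that $\mathcal{N}^{L_2}_K(\tilde w(\pi_1)) = w(t)$ differs from $t$ by an element of $\mathcal{N}^{L_2}_K(L_2^*)$ (this is where the strictness hypothesis enters), so that $\tilde w(\pi_1)$ can be corrected to a uniformizer of norm exactly $t$ without disturbing the intertwining of $\gamma_1$ and $\gamma_2$, and one must invoke the fact (recorded in the paper after the standard procedure) that the resulting power series is well defined up to conjugacy once the norm of the uniformizer is pinned to $t$. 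As written, your final paragraph names the obstacle but does not overcome it.
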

This result allows one to pass from the analysis of conjugacy classes of torsion elements of order $p^n$, to the analysis of strict equivalence continuous characters $\npg \to \Z/p^n \Z$; the structure of the $\Z_p$-module of continuous characters can be stratified according to the following definition:

\begin{defn}\label{bsdefn}
Let $\chi : \npg \to \Z/p^n \Z$ be a surjective continuous character. The \textit{break sequence} of the character $\chi$ is the tuple $\langle b^{(0)}, \ldots, b^{(n-1)} \rangle$ where for each $j$, the number $b^{(j)}$ is the largest integer $b$ for which there is a $z \in 1 + \nmaxi^b$ such that $\chi(z) = p^{j}$.
\end{defn}

\begin{lem}\label{bs_lemma}
\textrm{\bf (Lubin)} Let $\chi : \npg \to \Z/p^n \Z$ be a surjective continuous character. Let  $\langle b^{(0)}, \ldots, b^{(n-1)} \rangle$ be its break sequence. Then the following conditions hold:
\begin{enumerate}[(a)]
    \item\label{pri_cond} $\gcd(p,b^{(0)}) =1$.
    \item\label{ine_cond} For each $i >0$, $b^{(i)} \geq p b^{(i-1)}$, and
    \item\label{str_cond} If the above inequality is strict, then $\gcd(p,b^{(i)}) =1$.
\end{enumerate}
Conversely, every sequence $\langle b^{(0)}, \ldots, b^{(n-1)} \rangle$ satisfying the above three conditions is the break sequence of some character $\chi$ on $\npg$. There are only finitely many different characters $\chi$ with the break sequence $\langle b^{(0)}, \ldots, b^{(n-1)} \rangle$, and a \textrm{fortiori}, only finitely many strict equivalence classes of such characters.
\end{lem}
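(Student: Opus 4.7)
The engine of the argument is the identity $(1+at^b)^p = 1 + a^p t^{pb}$ in $\F_p[[t]]$, which says that the $p$-th power map carries $U_b$ into $U_{pb}$ and induces Frobenius on the successive quotient $U_b/U_{b+1} \cong \F_p$. Since $\F_p$ is perfect, every element of $U_{pb}$ can be written as $y^p v$ with $y \in U_b$ and $v \in U_{pb+1}$, by matching leading coefficients. Combined with the tautological relation $\chi(y^p) = p\chi(y)$, this decomposition drives all three necessary conditions.

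For necessity, the three arguments share a common structure. For (b), I would pick $u \in U_{b^{(i-1)}}$ with $\chi(u) = p^{i-1}$ (which exists by the definition of $b^{(i-1)}$); then $u^p \in U_{pb^{(i-1)}}$ satisfies $\chi(u^p) = p^i$, forcing $b^{(i)} \geq p b^{(i-1)}$. For (a) and (c), I would assume for contradiction that $b^{(i)} = pc$, with the extra hypothesis $c > b^{(i-1)}$ in case (c). Choose $z \in U_{pc}$ with $\chi(z) = p^i$ and decompose $z = y^p v$ as above. In case (a) (where $i = 0$), the equation $1 = p\chi(y) + \chi(v)$ gives $\chi(v) \equiv 1 \pmod{p}$, so $\chi(v)$ is a $\Z_p$-unit; in case (c), the inequality $c > b^{(i-1)}$ places $y$ inside $U_{b^{(i-1)}+1}$, on which $\chi$ takes values in $p^i \Z/p^n\Z$, so $\chi(y^p) \in p^{i+1}\Z/p^n\Z$ and $\chi(v) \equiv p^i \pmod{p^{i+1}}$. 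Either way, a suitable multiplicative power $v^k \in U_{b^{(i)}+1}$ satisfies $\chi(v^k) = p^i$, contradicting the maximality of $b^{(i)}$.

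For the converse, finiteness is essentially automatic: a character with the prescribed break sequence vanishes on the open subgroup $U_{b^{(n-1)}+1}$, hence factors through the finite quotient $U_1/U_{b^{(n-1)}+1}$, which admits only finitely many homomorphisms to $\Z/p^n\Z$. Existence I would establish by induction on $n$. The base case is immediate: $\FZ_{b^{(0)}} \pmod p$ has sole break $b^{(0)}$ by virtue of (a). For the induction step, given a realising character modulo $p^{n-1}$, I would lift it to a character $\chi' : U_1 \to \Z/p^n\Z$; when $b^{(n-1)} = p b^{(n-2)}$ the necessity argument applied to $\chi'$ already forces the last break to be $pb^{(n-2)}$, and otherwise I would push the break up to the prescribed value by adding a correction $p^{n-1} \FZ_{b^{(n-1)}}$, which is permissible because $\gcd(b^{(n-1)},p) = 1$ is guaranteed by (c).

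The principal obstacle is the existence half of the converse: one must verify that the $p^{n-1}$-correction does not perturb the earlier breaks and that the new last break lands exactly at $b^{(n-1)}$, not beyond. The first point is clean because the correction lives in $p^{n-1}\Z/p^n\Z$ and therefore vanishes modulo $p^{n-1}$, leaving the prior breaks untouched; but pinning the new break precisely at $b^{(n-1)}$ requires quantitative control over the "hidden" $p$-th-power contributions of the lift $\chi'$ along the filtration $U_{b^{(n-1)}} \supset U_{b^{(n-1)}+1}$. I expect this to force an auxiliary adjustment of $\chi'$ before adding the correction term, or alternatively an appeal to an Artin--Schreier--Witt construction, where a reduced Witt vector $(a_0,\ldots,a_{n-1})$ with prescribed pole orders realises the sequence directly.
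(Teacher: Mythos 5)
The paper offers no proof of this lemma --- it is quoted from Lubin's article --- so there is nothing internal to compare against; I am judging your argument on its own terms. The necessity half and the finiteness statement are correct and complete. The identity $(1+at^b)^p=1+a^pt^{pb}$, the resulting decomposition $U_{pb}=U_b^p\,U_{pb+1}$, and the relation $\chi(y^p)=p\chi(y)$ do yield (a), (b), (c) exactly as you describe; in (c) you also use, correctly, that $\chi(U_{b^{(i-1)}+1})$ is a subgroup of $\Z/p^n\Z$ not containing $p^{i-1}$ and hence is contained in $p^i\Z/p^n\Z$. This is the standard route and I see no error in it.

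The genuine gap is in the existence half of the converse, and it is slightly larger than the one you flagged. Your claim that when $b^{(n-1)}=pb^{(n-2)}$ the necessity argument ``already forces'' the last break of an arbitrary lift $\chi'$ to equal $pb^{(n-2)}$ is false: necessity gives only the lower bound $b^{(n-1)}\geq pb^{(n-2)}$. Concretely, for $n=2$ the character $\FZ_1\bmod p$ has sole break $1$, but its lift $\chi'=\FZ_1+p\,\FZ_{p+1}$ satisfies $\chi'(1+t^{p+1})=p$ with $1+t^{p+1}\in U_{p+1}$, so its last break is at least $p+1>p$. So in both cases of your induction step the last break of the lift must actually be computed, not inferred from the inequality. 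Both this issue and the one you raise (pinning the break at exactly $b^{(n-1)}$ after adding $p^{n-1}\FZ_{b^{(n-1)}}$) disappear if you drop the abstract lift and write the character explicitly in the dual basis of Proposition \ref{u1_structure}: take
\[
\chi=\sum_{j}p^{j}\,\FZ_{b^{(j)}},
\]
omitting the $j$-th summand whenever $p\mid b^{(j)}$ (equivalently, by (c), whenever $b^{(j)}=pb^{(j-1)}$). Since $U_m=\prod_{p\nmid c}(1+t^c)^{p^{e_c(m)}\Z_p}$ with $e_c(m)$ the least $e$ such that $p^ec\geq m$, one has $\FZ_c(U_m)=p^{e_c(m)}\Z/p^n\Z$, and conditions (b) and (c) let you compare the exponents $j+e_{b^{(j)}}(m)$ contributed by the various summands to conclude $\chi(U_{b^{(i)}})=p^{i}\Z/p^n\Z$ and $\chi(U_{b^{(i)}+1})=p^{i+1}\Z/p^n\Z$ for every $i$. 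This realizes the prescribed break sequence in one stroke, with no induction, no correction term, and no appeal to Artin--Schreier--Witt theory.
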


\newcommand{\tr}{\textrm{Tr}}
\noindent Lubin classified torsion elements of order $p^2$ of break sequence type $\langle 1, m \rangle$ over a finite field $\kappa = \F_{p^f}$. We discuss it briefly. The principal unit group $U_1$ is endowed with the structure of a $W_\infty(\kappa)$-module with the aid of Artin-Hasse exponential, where $W_\infty(\kappa)$ is ring of Witt vectors over $\kappa$. Then $\Hom_{\Z_p}^{\textrm{cont}}(U_1, \Z/p^2 \Z)$ has the structure of a free $W_2(\kappa)$-module, with basis $\Set{\mathfrak{Z}_m}$, one for each $ m$ with $p \nmid m$.  

Let $\chi$ be a character of order $p^2$ and type $\langle 1, m \rangle$. As it turns out, the character $\chi$ can be expressed as 
$$
\chi = a_0 \mathfrak{Z}_1 + \sum_{\substack{j \geq 1 \\ p \nmid j}} a_j p \mathfrak{Z}_j.
$$
where $a_j \in \kappa$, for all $j \geq 0$, and in particular $a_0, a_m \in \kappa^*$.
Let $\tr: \kappa \to \F_p$ denote the (usual) trace map.  Define the \textit{indicator} $\ind \chi$ as
$$
\ind(\chi) =
\begin{cases}
 [a_0, \tr(a_{p-1}/a_0^{p-1})] & \textrm{ if } m = p\\
 [a_0,a_m, 0] & \textrm{ if } m \equiv 1 \pmod{p}\\
[a_0,a_m, \tr(a_0a_{m-1}/ a_m)] & \textrm{ otherwise}
\end{cases}
$$

\begin{thm}\label{Lubin_<1,m>}
\textrm{\bf (Lubin)}\footnote{Unpublished work of Lubin}   Let $\chi$ and $\psi$ be characters of type $\langle 1,m \rangle$ over a finite field $\kappa$ of characteristic $p$.
Then $\chi$ and $\psi$ are strictly equivalent if and only if $\ind \chi = \ind \psi$.
\end{thm}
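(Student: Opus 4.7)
The plan is to adapt the backward-induction strategy sketched for Theorem \ref{intro_my_main_thm} to the type $\langle 1, m \rangle$ case, while tracking the extra kernel condition $u(t)/t \in \ker \chi$ that distinguishes strict from weak equivalence. Expand both characters as $\chi = a_0 \mathfrak{Z}_1 + \sum_{j \geq 1,\, p \nmid j} a_j p \mathfrak{Z}_j$ and similarly $\psi$ with coefficients $b_j$. For necessity, assume $\psi = {}_w \chi$ with $w(t) = t(1 + c_1 t + c_2 t^2 + \cdots)$ and $w(t)/t \in \ker \chi$. Evaluating at $E_1 = 1 + t$ yields $b_0 = a_0$ in $\kappa^*$. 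Evaluating at the higher basis elements $E_j$ with $2 \leq j \leq m$ and extracting the coefficient in $p \Z/p^2 \Z$ produces explicit polynomial identities $b_j \equiv a_j + P_j(a_0, \ldots, a_{j-1};\, c_1, \ldots, c_{j-1}) \pmod{p}$. The kernel condition $\chi(w(t)/t) = 0$ contributes one further relation among the $c_i$; eliminating the $c_i$ from all of these then leaves precisely $a_0$, $a_m$, and either $\tr(a_0 a_{m-1}/a_m)$, $\tr(a_{p-1}/a_0^{p-1})$, or nothing further depending on $m \bmod p$, which is the ``only if'' half.

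For sufficiency, assuming $\ind \chi = \ind \psi$, I would construct $u \in \notty$ by backward induction on $j$. At stage $j = m, m-1, \ldots, 1$ pick a substitution $u_j$ whose new coefficient sits at a single carefully chosen degree, so that after composing with all previous substitutions the resulting character agrees with $\psi$ at every index $k$ with $j \leq k \leq m$. The step from stage $j+1$ to stage $j$ reduces to a single equation over $\kappa$ for the new coefficient of $u_j$: in the generic range, where $p \nmid j$ and $j \neq p$, this equation is linear with unit coefficient, so solvability is automatic and the new coefficient is uniquely determined. The kernel condition $u(t)/t \in \ker \chi$ can then be imposed at the end by absorbing a residual degree of freedom into a high-degree coefficient of $u$, which does not disturb any of the already-matched indices.

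The main obstacle, and the place where $\ind \chi$ actually enters, is the two boundary steps where the generic inductive equation degenerates. When $m = p$, the equation for the last coefficient of $u$ is of Artin--Schreier type, roughly of the shape $x^p - a_0^{p-1} x = \alpha$ with $\alpha$ the accumulated discrepancy; by Artin--Schreier theory, solvability over $\kappa$ is controlled precisely by the $\mathbb{F}_p$-trace, so the invariant $\tr(a_{p-1}/a_0^{p-1})$ is exactly the cokernel class of this equation. When $m \equiv 1 \pmod{p}$, the multiplier $(m-1)$ in the critical linear step vanishes modulo $p$, but simultaneously the dependence of the top discrepancy on $a_{m-1}$ vanishes as well, so no extra invariant beyond $a_0, a_m$ is required. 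In the remaining case, solvability of the critical step is measured by $\tr(a_0 a_{m-1}/a_m)$, recovering the general formula. Verifying in each branch that the obstruction from the induction coincides with the invariant extracted in the necessity step completes the proof.
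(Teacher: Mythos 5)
First, note that the paper itself gives no proof of this theorem: it is stated as unpublished work of Lubin, so there is no argument in the paper to compare yours against. Judged on its own terms, your outline correctly mirrors the strategy the paper uses for type $\langle 2,m \rangle$ (expand in a dual basis, match coefficients by backward induction on the index, and isolate the residual obstruction as the indicator), and your identification of the $m=p$ case with an Artin--Schreier/trace mechanism is the right flavor. But as written the proposal has two genuine gaps.

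The more serious one concerns where the trace invariant actually comes from. You propose to run the backward induction freely and then ``impose the kernel condition at the end by absorbing a residual degree of freedom into a high-degree coefficient of $u$.'' For $n=2$, Lemma \ref{lubin_lemma} reduces strictness to the condition $\chi(u(t)/t)\equiv 0\pmod{p}$, and for a character of type $\langle 1,m\rangle$ this is (up to an $\F_p$-linear trace form built from $a_0$) a constraint on the \emph{degree-one} coefficient $c_1$ of $u$ --- precisely the coefficient your base step of the induction uses to adjust the index-$(m-1)$ coefficient via $a_{m-1}\mapsto a_{m-1}+(m-1)c_1 a_m$. A high-degree coefficient of $u$ cannot repair this: it only affects $\chi(u(t)/t)$ modulo $p^{2}$, which is the easy half of Lemma \ref{lubin_lemma}. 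Consequently, in the generic case $m\not\equiv 0,1\pmod{p}$ the inductive linear step is \emph{not} degenerate (the multiplier $(m-1)a_m$ is a unit, so under mere weak equivalence $a_{m-1}$ could be normalized arbitrarily and would not appear in any invariant at all); the invariant $\tr(a_0 a_{m-1}/a_m)$ arises because strictness confines $c_1$ to a trace-zero hyperplane of $\kappa$, so only the image of that hyperplane under $c_1\mapsto (m-1)a_m c_1$ can be added to $a_{m-1}$. Your proposal attributes this obstruction to a degeneration of the inductive equation, which misplaces it and leaves the sufficiency argument unjustified exactly in the main case. The second gap is that the theorem is over an arbitrary finite field $\kappa$, where the basis $E_j=1+t^j$ you compute with is not adapted to the $W_2(\kappa)$-module structure; the paper explicitly warns that the Artin--Hasse basis is ``almost mandatory'' there, and the Frobenius twist visible in the invariant $\tr(a_{p-1}/a_0^{p-1})$ for $m=p$ cannot be produced from the prime-field basis computations you set up.
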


The main object of this article is to classify torsion elements of the Nottingham group over the prime field $\F_p$, of order $p^2$ and type $\langle 2, m \rangle$.

\section{Topological structure of $U_1$ and consequences}
\label{top_str}

In this section, we describe the topological structure of $\npg$, then analyze the structure of the $\Z_p$-module $\Hom_{\Z_p}(\npg, \zmodpnz)$ in which the characters $\chi: \npg \to \zmodpnz$ live. We obtain a coarse classification of the equivalence classes of strict equivalent characters which will be refined in the next section. 

\subsection{Topological structure of $\npg$}

Let us begin by recalling the topological structure of the multiplicative group $\npg$ \cite[Chap. 2]{Iw}. The principal unit group $\npg$ has a $\Z_p$-module structure as follows. More generally any finite abelian $p$-group  has a structure of a $\Z/p^n \Z$- module for any $n$ sufficiently large. Indeed, the canonical isomorphism $\zmodpnz \cong \Z_p/p^n \Z_p$ together with the natural projection map $\Z_p \to \Z_p/p^n \Z_p$ gives a canonical structure of $\Z_p$-module. Hence any abelian pro-$p$-group $G$ has also a canonical $\Z_p$-module structure; let us recall any abelian pro-$p$-group is the projective limit of a family $\Set{G_i}$ of finite abelian $p$-groups. In particular, since $\npg$ is the projective limit of finite abelian $p$-groups $\npg / \npgpower{n}$ with respect to the canonical maps $\npg / \npgpower{m} \to \npg / \npgpower{n}$ for $m \geq n \geq 0$, we have :
$$
\npg = \varprojlim_{n} \npg/ \npgpower{n}. 
$$
\noindent Alternatively, we have the following topological isomorphism \cite[Prop. 2.8]{Iw}
\begin{prop}\label{u1_structure}
The $\Z_p$-module $\npg$ is topologically isomorphic to the direct product of countably infinite copies of $\Z_p$. In fact,
$$
\npg \cong \prod_{\substack{ n \geq 1 \\ p \nmid n}} (1+t^n)^{\Z_p}
$$
\end{prop}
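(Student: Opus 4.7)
The plan is to construct the explicit map
$$
\Phi : \prod_{\substack{n\ge 1\\ p\nmid n}} \Z_p \longrightarrow \npg,
\qquad (\lambda_n)_n \longmapsto \prod_{\substack{n\ge 1\\ p\nmid n}}(1+t^n)^{\lambda_n},
$$
and to prove that it is a topological $\Z_p$-module isomorphism; the displayed formula then gives the asserted product decomposition, and the first sentence of the proposition follows immediately. To make sense of each factor, I use that $\npg=\varprojlim_k \npg/U_k$ is an abelian pro-$p$-group and hence carries a unique continuous $\Z_p$-action extending its $\Z$-action, with each closed subgroup $U_n$ stable under it. Since $1+t^n\in U_n$, we have $(1+t^n)^{\lambda_n}\in U_n$, so the partial products for $n\le N$ are constant modulo $U_{N+1}$ and the infinite product converges in the $\maxideal$-adic topology. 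It is then immediate that $\Phi$ is a continuous $\Z_p$-module homomorphism.

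The core of the argument is bijectivity, which rests on the characteristic-$p$ identity
$$
(1+t^m)^{p^k} \;=\; 1+t^{mp^k},
$$
valid for all $m\ge 1$ and $k\ge 0$, with an \emph{exact} second term and no higher-order tail. I would prove surjectivity by successive approximation: given $u=1+\sum_{j\ge 1}a_j t^j \in \npg$, I fill in the $p$-adic digits $c_{m,k}\in\{0,\ldots,p-1\}$ of $\lambda_m=\sum_{k\ge 0} c_{m,k}p^k$ in order of increasing $j=mp^k$ (with $p\nmid m$). At the step corresponding to $j$, the partial product constructed so far lies in $\npg$, and the only factor contributing a new term at degree exactly $j$ is $(1+t^{mp^k})^{c_{m,k}} = 1 + c_{m,k}t^{j} +\cdots$; hence the degree-$j$ coefficient is a polynomial in the previously chosen digits plus $c_{m,k}$ itself, and matching with $a_j$ determines $c_{m,k}$ uniquely. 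Injectivity follows by the same triangular bookkeeping: if $(\lambda_n)\ne 0$, pick the pair $(m,k)$ minimising $j=mp^k$ among those with $c_{m,k}\ne 0$, and observe that the coefficient of $t^j$ in $\Phi((\lambda_n))$ must equal $c_{m,k}\ne 0$, contradicting $\Phi((\lambda_n))=1$.

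Finally, $\Phi$ is a continuous bijective homomorphism between compact Hausdorff topological groups (the source is compact by Tychonoff, the target is compact in its $\maxideal$-adic topology), and hence is automatically a topological isomorphism by the standard fact that such a map is open.

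The main obstacle is to justify the ``diagonal-with-leading-coefficient-one'' structure of the recursion used in the surjectivity and injectivity arguments. It is precisely here that the identity $(1+t^m)^{p^k}=1+t^{mp^k}$---an artefact of characteristic $p$---is essential: it ensures that, on the successive graded pieces $U_j/U_{j+1}\cong\F_p$, the linearisation of $\Phi$ is an isomorphism row by row, so each $\F_p$-digit $c_{m,k}$ of $\lambda_m$ is uniquely recoverable from the Taylor coefficient $a_{mp^k}$ of $u$ modulo previously determined data.
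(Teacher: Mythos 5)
Your proof is correct. The paper itself does not prove this proposition at all --- it simply cites Proposition 2.8 of Iwasawa's \emph{Local class field theory} --- so there is no in-paper argument to compare against; what you have written is the standard self-contained proof that the citation delegates. All the load-bearing points are in place: the characteristic-$p$ identity $(1+t^m)^{p^k}=1+t^{mp^k}$, the unique factorization $j=mp^k$ with $p\nmid m$ which makes the digit-by-digit system triangular with unit diagonal on each graded piece $U_j/U_{j+1}\cong\F_p$, and the compactness argument that upgrades a continuous bijective homomorphism to a topological isomorphism.
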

Let us note that the family $\Set{1 + t^m }$ is a topological basis of $\npg$. This basis serves well while working over a prime field of characteristic $p >0$, and not so well over arbitrary finite fields. In the latter case the basis constructed with the aid of Artin-Hasse exponential is almost mandatory; the reader may refer to \cite[\S 17.5]{Ha} for a full treatment of the topic. 
\subsection{Description of the module $\Hom_{\Z_p}^{\textrm{cont}}(\npg, \Z/p^n \Z)$}\label{desc}
The principal unit group $\npg$ is equipped with two actions, namely that of Nottingham group $\notty$ and that of $\Z_p$. The action of Nottingham group $\notty$ via $(z,u) \mapsto z \circ u$ (where the $\circ$ denotes composition of power series) for $z \in \npg$ and $u \in \notty$. (Recall, that if $ u = t( 1 + \sum_{i \geq 1} a_i t^i)$, and $z = 1 + \sum_{j \geq 1} b_j t^j$ then $z \circ u := 1 + \sum_{j \geq 1} b_j u^j$.) On the other hand, by means of exponentiation by a $p$-adic integer we have the natural action of $\Z_p$ on $\npg$; this action is well-defined since for $z \in \npg$ the $(t)$-adic limit of the sequence $\Set{z^{p^n}}$ is $1$.
These two actions on $\npg$ are compatible, i.e.  for $a \in \zp, u \in \notty$, we have
$$
(z^a) \circ u = (z \circ u)^a,
$$
which can be readily verified by replacing $a$ with approximation by integers $(a_n)$ for $a_n \in \zmodpnz$ converging to $a$. 

From the description of $U_1$ as a $\Z_p$-module, it follows that we have a $\Z/p^n \Z$-module structure on $\Hom_{\zp}^{\textrm{cont}}(\npg, \zmodpnz)$ with basis $\FZ_m$ dual to the basis $\Set{1 + t^m }$ of $\npg$, where $m \geq 1$ and $(m,p) =1$: with the notation $E_j := 1 + t^j$, we have $\FZ_i( E_j) = \delta_{ij}$  where $\delta_{ij}$ is the Kronecker delta function. In other words, the set of continuous characters from $U_1 \to \Z/p^n \Z$ is the direct sum (not product) of the character groups on the factors $\langle 1 + t^m \rangle$ into $\Z/p^n \Z$ for $p \nmid m$.  It follows that a character $\chi \in \Hom_{\zp}(\npg, \zmodpnz)$ has the following form
$$
\chi = \sum_{ \substack{j \geq 1 \\ p \nmid j}} a_j \FZ_j \; \; a_j \in \Z/p^n \Z.
$$
Now, let $\chi : \npg \to \zmodpnz$ be a surjective (continuous) character with break sequence $\langle b^{(0)}, \ldots, b^{(n-1)} \rangle$. From the definition of the break sequence [Def. \ref{bsdefn}] it follows that the kernel  of $\chi$  contains $\npgpower{b^{(n-1)}+1}$. Therefore all characters with this break sequence are in $\Hom_{\Z_p}(\npg/\npgpower{b^{(n-1)}+1}, \zmodpnz)$, which is a direct sum of cyclic $\Z/p^n \Z$-modules, one for each integer $j \leq m$ prime to $p$.

\subsection{Strict equivalence condition}

\begin{lem}\label{lubin_lemma} \footnote{I thank Jonathan Lubin for sharing this proof with me.}
Let $\chi, \psi : \npg \to \zmodpnz$ be two continuous characters. Then $\chi$ is strictly equivalent to $\psi$ if and only if there an $u \in \notty$ such that $\psi = {}_u \chi$ and $\chi( u(t)/t ) \equiv 0 \pmod{p^{n-1}}$.
\end{lem}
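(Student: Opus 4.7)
The forward direction is immediate from the definitions: if $\chi \simeq \psi$ via some $w \in \notty$, then $\chi(w(t)/t) = 0 \equiv 0 \pmod{p^{n-1}}$, and we take $u := w$.

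For the converse, suppose $\psi = {}_u\chi$ with $\chi(u(t)/t) = p^{n-1}c$ for some $c \in \Z/p\Z$. The plan is to find $s \in \notty$ with ${}_s\chi = \chi$ and $\chi(s(t)/t) = -p^{n-1}c$, and then set $v := u \circ s$. Since ${}_v\chi = {}_u({}_s\chi) = \psi$, it suffices to verify that $\chi(v(t)/t) = 0$. Using the factorization $v(t)/t = (u(s(t))/s(t)) \cdot (s(t)/t)$ together with the observation $u(s(t))/s(t) = (u(t)/t) \circ s$, we obtain
\[
\chi(v(t)/t) \;=\; ({}_s\chi)(u(t)/t) + \chi(s(t)/t) \;=\; \chi(u(t)/t) + \chi(s(t)/t) \;=\; p^{n-1}c - p^{n-1}c \;=\; 0,
\]
where the second equality uses ${}_s\chi = \chi$. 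Thus $v(t)/t \in \ker\chi$ and $\chi \simeq \psi$.

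The construction of $s$ rests on two observations. Let $m := b^{(n-1)}$ denote the depth of $\chi$, so $\chi$ vanishes on $U_{m+1}$. First, I claim that every $s \in \notty$ with $s(t)/t \in U_m$ satisfies ${}_s\chi = \chi$. Writing $s(t) = t(1+\eta)$ with $\eta \in \maxideal^m$, one expands
\[
f(s(t)) = \sum_{j \geq 0} c_j t^j (1+\eta)^j \qquad \text{for } f = \sum_j c_j t^j \in U_1,
\]
and observes that every term of $f(s(t)) - f(t)$ is divisible both by $\eta$ and by $t^{k_0}$, where $k_0 \geq 1$ is the smallest index with $c_{k_0} \neq 0$. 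Hence $f(s(t))/f(t) \in U_{m + k_0} \subseteq U_{m+1} \subseteq \ker\chi$, which gives ${}_s\chi = \chi$. Second, since $U_m/U_{m+1} \cong \F_p$ is cyclic of order $p$ and $\chi|_{U_m}$ is nontrivial while $\chi|_{U_{m+1}} = 0$, the image $\chi(U_m) \subseteq \Z/p^n\Z$ must be the unique subgroup of order $p$, namely $p^{n-1}\Z/p^n\Z$. Thus $-p^{n-1}c \in \chi(U_m)$, and we can choose $z \in U_m$ with $\chi(z) = -p^{n-1}c$ and take $s \in \notty$ to be the unique element with $s(t) = tz$.

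The main obstacle is the first auxiliary fact, that $s(t)/t \in U_m$ is sufficient to force $s \in \stab(\chi)$. It is a careful binomial expansion calculation in which one must confirm that \emph{all} higher-order terms in $\eta$, not only the linear term $\eta \cdot t f'(t)$, contribute at valuation $\geq m+1$; everything else in the argument is essentially formal.
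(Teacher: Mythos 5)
Your proof is correct and follows essentially the same route as the paper's: both arguments correct $u$ by composing with an element $s = tz$ with $z \in U_{b^{(n-1)}}$, which stabilizes $\chi$ (because $\chi$ kills $U_{b^{(n-1)}+1}$) and whose value $\chi(z)$ is chosen to cancel the residue $\chi(u(t)/t) \in p^{n-1}\Z/p^n\Z$. The only difference is cosmetic: you evaluate $\chi(v(t)/t)$ through the exact factorization $v(t)/t = \bigl((u(t)/t)\circ s\bigr)\cdot\bigl(s(t)/t\bigr)$, whereas the paper reaches the same cancellation via a congruence modulo $t^{b^{(n-1)}+1}$.
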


\begin{proof}
Necessity of the condition follows from the definition: if $u(t)/t \in \ker(\chi)$, i.e. $\chi(u(t)/t) \equiv 0 \pmod{p^n}$ then obviously $\chi(u(t)/t) \equiv 0\pmod{p^{n-1}}$. As for the sufficiency of the condition, let $\genbs$ be the break sequence of the character $\chi$. Let $m = b^{(n-1)}$. By definition $\npgpower{m+1} \subset \ker(\chi)$ and $U_m \not \subseteq \ker(\chi)$. We claim that if $z \in \npgpower{m}$ and $w = t z \in \notty$ then ${}_w \chi = \chi$. Indeed, evaluating at the basis elements $E_j := 1 + t^j$, and noting that 
\begin{align*}
1 + w^j &= 1 + t^j z^j  = 1 + t^j (1 + t^m z')  \\
&= 1 + t^j + t^{j+m}z' \equiv  1 +t^j \pmod{U_{m+1}}
\end{align*} 
for some $z' \in \F_p[\![t ]\!]$, we obtain 
\[
{}_w \chi( E_j) = \chi( E_j \circ w ) = \chi(1 + w^j) 
                 \equiv \chi(1+t^j)  \equiv \chi(E_j)\pmod{\npgpower{m+1}}.
\]
Let $u \in \notty$ be the element given by the condition, i.e. that $\psi = {}_u \chi$ and $\chi(u(t)/t)  \equiv 0 \pmod{p^{n-1}}$. This implies, in particular, that $p^{-(n-1)} \chi(u(t)/t)) = \lambda \in  \F_p$. More generally, we have a nonzero $\F_p$-linear map
$$
\widetilde{\chi} : \F_p \to \F_p
$$
defined by $\widetilde{\chi}(a) = \chi(1+ a t^m)/p^{n-1} \in \F_p$.  In particular, there exists $a \in \F_p$ such that $\widetilde{\chi}(a) = \chi(1 + at^m)/p^{n-1} = - \lambda \in F_p$.

Put $w(t) = t + a t^{m+1} \in \notty$. Then ${}_{u \circ w} \chi = {}_u ({}_w \chi) = {}_ u \chi = \psi$, so that the first  condition still holds. We now calculate $\chi(u \circ w(t))/t$. Since $(u \circ w)(t) \equiv u(t) + a t^{m+1} \pmod{(t^{m+2})}$, we have
\begin{align*}
(u \circ w)(t)/t &\equiv \frac{u(t)}{t} + at^m \pmod{t^{m+1}} \\ 
&\equiv \frac{1}{t}u(t) (1 + a t^m) \pmod{t^{m+1}},
\end{align*}
and therefore
\[
 \chi ( (u \circ w)(t)/t )/p^{n-1} = \chi(u(t)/t)/p^{n-1} + \chi(1 + a t^m) = \lambda - \lambda =0.
\]
Thus, $\chi ( (u \circ w)(t)/t )/p^{n-1}$ is divisible by $p$, hence $\chi ( (u \circ w)(t)/t ) \equiv 0 \pmod{p^n}$ so that, by definition, $\psi$ is strictly equivalent to $\chi$.
\end{proof}
\section{Structure of characters}\label{str_char}
Now we begin the classification of strict equivalent classes of type $\langle 2, m \rangle$ corresponding to Nottingham elements of order $p^2$. Up to conjugacy in $\notty$, they correspond to strict equivalence classes of characters of type $\langle b^{(0)}, b^{(1)} \rangle$ [Theorem \ref{main_thm}]. From Lemma \ref{bs_lemma} it follows that $b^{(0)}$ is prime to $p$, and  $b^{(1)} \geq p b^{(0)}$ which, if strict, implies that $b^{(1)}$ is prime to $p$, otherwise $b^{(1)} = 2 p$. In this article, we classify strict equivalence classes where $b^{(0)}=2$. Then since $p \nmid b^{(0)}$, the prime $p \neq 2$. The classification depends on whether $b^{(1)}$ is prime to $p$ or not, and if it is prime to $p$ then whether it is congruent to $1 \pmod{p}$ or not.  Note that there is only one case for $m \equiv 0 \pmodp$, namely that $m = 2p$; thus we have only `exceptional' case $\langle 2, 2p \rangle$. Also note that a character of certain type $\langle b^{(0)}, b^{(1)} \rangle$ is automatically surjective by definition. So when we talk of characters `of type\ldots' we do not explicitly mention that the character is surjective. Recall that further  we omit the adjective `continuous', as no other kind is considered in this article; see \S \ref{not}.

Consider the $\Z/p^2 \Z$-module $\Hom_{\Z_p}^{\textrm{cont}}(U_1, \Z/p^2 \Z)$ of characters, $\chi : 1 + \FM \to \Z/p^2 \Z$. For $p \nmid j$, let  $\FZ_j$ be the canonical basis dual to $E_j := 1 + t^j$, i.e. $\FZ_j( E_i) = \delta^{i}_{j}$, the Kronecker delta function.  Let $\CalA = \{ 0, 1 \ldots, p^2 -1 \}$ be a fixed choice of representatives of $\Z/p^2 \Z$. Then any character $\chi$ of type $\langle 2,m \rangle$ has the expression of the form
$$
\chi = \sum_{ \substack{1 \leq j \leq m \\ p \nmid j}} a_j \FZ_j.
$$ 
with the coefficients $a_j \in \CalA$. From the definition of break sequence [Definition \ref{bsdefn}], it follows that
\begin{enumerate}\label{coe_pro}
\item
$a_1 \in \Z/p^2 \Z$,
\item
$a_2 \in (\Z/p^2 \Z)^*$,
\item 
For all $3 \leq j \leq m$ such that $p \nmid j$, the coefficient  $a_j \in \Set{0,p,2p, \ldots, (p-1)p}$, 
\item
If $(m,p) =1$, then $a_m \neq 0$.
\end{enumerate}
\noindent Writing $a_1,a_2$ in the form $x + p . y$ where $x,y \in \{ 0, 1 \ldots, p-1 \}$, and changing the notation, the above expansion takes the following form
$$
\chi = x_1 \FZ_1 + x_2 \FZ_2 + \sum_{\substack{1 \leq j \leq m \\ p \nmid j}} a_j. p \FZ_j,
$$
where now $x_1, x_2, a_1, \ldots, a_m   \in \{ 0, 1, \ldots p-1 \}$, and $x_2 \neq 0$, and if $(m,p) =1 $ then $a_m \neq 0$ too. To emphasize further, the new $a_j$ after abuse of notation is equal to the `old' $a_j$ divided by $p$. We call this expansion of $\chi$ as in equation \textit{standard expansion}.  

Let $\chi, \psi$ be two characters of type $\langle 2, m \rangle$ with standard expansions:
\begin{equation}\label{sta_exp}
\begin{split}
\chi = x_1 \FZ_1 + x_2 \FZ_2 + \sum_{\substack{1 \leq j \leq m \\ p \nmid j}} a_j. p \FZ_j, \\
\psi = y_1 \FZ_1 + y_2 \FZ_2 + \sum_{\substack{1 \leq j \leq m \\ p \nmid j}} b_j. p \FZ_j.
\end{split}
\end{equation} The following lemma will be used throughout the article; its motivation will be apparent while proving the main result.

\begin{lem}\label{coarse}
Let $\chi, \psi$ be two characters of type $\langle 2, m \rangle$ with standard expansions as in equation \eqref{sta_exp}. Let $u(t) = t(1 + \alpha t + \beta t^2 \ldots) \in \notty$. Then,

\begin{enumerate}[(a)]
\item\label{12}
$\ub{u} \chi(E_1(t)) \equiv x_1 + \alpha x_2 \pmodp$,  and $\ub{u} \chi(E_2(t)) \equiv x_2 \pmodp$.
\item\label{m}
If $p \nmid m$ then $\ub{u} \chi(E_m(t)) \equiv p. a_m \pmodpp$.

\item\label{m-1}
If $m \neq 0,1 \pmodp$ then $\ub{u} \chi(E_{m-1}) \equiv p (a_{m-1} + (m-1) \alpha a_m) \pmodpp$.

\item\label{meq_0}
If $m \equiv 0 \pmod p$, so that $m = 2p$, then 
$\ub{u} \chi(E_{m-1}) \equiv p (a_{m-1} + \eta x_2) \pmodpp$ where $\eta^p = (m-1)\alpha$.

\end{enumerate}

\end{lem}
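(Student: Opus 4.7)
The plan is, for each relevant index $k \in \{1,2,m-1,m\}$, to compute $E_k \circ u(t) = 1 + u(t)^k$ as a power series, factor it modulo a sufficiently high unit subgroup $U_r$ as a product of basis elements $E_j = 1 + t^j$ raised to $\Z_p$-powers, and then evaluate $\chi$ using its $\Z_p$-linearity. The precision $r$ is chosen so that the discarded factor vanishes under $\chi$: for (a) one may work modulo $p$, exploiting $\chi(U_3) \subseteq p\Z/p^2\Z$; for (b)--(d) one works modulo $p^2$ and uses $\chi(U_{m+1}) = 0$, which is automatic from the break sequence being $\langle 2, m\rangle$.

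Parts (a)--(c) reduce to straightforward binomial expansions. For (a), dividing $1 + u = 1 + t + \alpha t^2 + \cdots$ by $E_1$ gives $1 + u \equiv E_1(1 + \alpha t^2) \pmod{U_3}$, and since $1 + \alpha t^2 \equiv E_2^\alpha \pmod{U_4}$, one obtains $\ub{u}\chi(E_1) \equiv x_1 + \alpha x_2 \pmod p$; the analogous computation $1 + u^2 \equiv E_2 \pmod{U_3}$ produces $\ub{u}\chi(E_2) \equiv x_2 \pmod p$. For (b), a direct expansion gives $1 + u^m \equiv E_m \pmod{U_{m+1}}$, so $\ub{u}\chi(E_m) \equiv \chi(E_m) = p\,a_m \pmod{p^2}$. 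For (c), expanding $u^{m-1} = t^{m-1} + (m-1)\alpha t^m + O(t^{m+1})$ and dividing by $E_{m-1}$ yields
\[
1 + u^{m-1} \equiv E_{m-1}\bigl(1 + (m-1)\alpha t^m\bigr) \pmod{U_{m+1}},
\]
and under $p \nmid m$ the second factor coincides with $E_m^{(m-1)\alpha}$ modulo $U_{2m} \subseteq U_{m+1}$; applying $\chi$ gives the stated $p(a_{m-1} + (m-1)\alpha\, a_m)$.

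The main obstacle is part (d), where $m = 2p$ makes $E_m$ equal to $E_2^p$ inside $\F_p[[t]]$, so that $E_m$ is no longer among the chosen dual-basis elements and the argument of (c) fails for the factor $1 + (m-1)\alpha t^m$. I would handle this via the Frobenius identity in characteristic $p$: pick $\eta \in \F_p$ with $\eta^p = (m-1)\alpha$, so that
\[
(1 + \eta t^2)^p = 1 + \eta^p t^{2p} = 1 + (m-1)\alpha t^m.
\]
Writing $1 + \eta t^2 = E_2^\eta \cdot w$ with $w \in U_4$ (which follows from the binomial expansion of $E_2^\eta$) and raising to the $p$-th power pushes the error into $U_{4p} \subseteq U_{m+1}$, giving $1 + (m-1)\alpha t^m \equiv E_2^{\eta p} \pmod{U_{m+1}}$. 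Applying $\chi$ and using $\chi(E_2) \equiv x_2 \pmod p$ yields $\chi(1 + (m-1)\alpha t^m) \equiv \eta p\, x_2 \pmod{p^2}$, which together with $\chi(E_{m-1}) \equiv p\, a_{m-1} \pmod{p^2}$ produces $\ub{u}\chi(E_{m-1}) \equiv p(a_{m-1} + \eta x_2) \pmod{p^2}$, as claimed.
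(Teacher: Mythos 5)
The proposal is correct and follows essentially the same route as the paper: parts (a)--(c) by direct binomial expansion and factorization of $1+u^k$ into basis elements $E_j$ modulo $U_{m+1}$ (resp.\ modulo a level sufficient for the mod-$p$ claims in (a)), and part (d) via the characteristic-$p$ identity $(1+\eta t^2)^p = 1+\eta^p t^{2p}$ to convert the degree-$m$ factor into $E_2^{\eta p}$. Your explicit bookkeeping of the error term ($w\in U_4$, hence $w^p\in U_{4p}\subseteq U_{m+1}$) is a slightly more careful version of a step the paper asserts without comment, but the underlying argument is identical.
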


\begin{proof}

\begin{enumerate}[(a)]
\item
Since $E_1 \circ u(t) = 1 + u(t) = 1 + t(1 + \alpha t + \beta t^2 + \ldots) = 1 + t + \alpha t^2 + \beta t^3 + \ldots  = (1 + t) (1 + t^2)^{\alpha} \ldots$ we have, modulo $p$,
\begin{align*}
\ub{u} \chi (E_1(t))
&= \chi(E_1 \circ u(t)) = \chi((1+t) (1 + t^2)^\alpha) \\
&=  \chi(E_1(t)) + \chi(E_2^\alpha (t) = x_1 + \alpha x_2. 
\end{align*}
\noindent Similarly, the other relation is obtained by evaluating at $E_2$, and using the expansion $1 + u(t)^2 = 1 + t^2 (1 + \alpha t + \beta t^2 + \ldots)^2 = 1 + t^2 + \ldots = (1 + t^2) \ldots$. Thus, modulo $p$, 
\[
{}_{u} \chi (E_2(t) ) = \chi(E_2  \circ u(t)) = \chi (1 + t^2) = x_2.
\]

\item
 If $p \nmid m$ then, modulo $p^2$,
\begin{align*}
{}_{u} \chi (E_m(t) )  =  \chi( E_m \circ u(t)) =  \chi (1 +  t^m) = \chi(E_m(t)) = pa_m  .
\end{align*}

\item
First note that $ E_{m-1} \circ u(t) = 1 + u(t)^{m-1} =
1 + t^{m-1} + (m-1) \alpha t^m + \ldots$, then the conclusion follows by evaluating $\ub{u} \chi$  at $E_{m-1}$.

\item
First observe that $1 + \alpha t^m = (1 + \eta t^2)^{p}$ with $\eta^p = \al$. Then we have 
\begin{align*} 
E_{m-1} \circ u(t) &= 1 + t^{m-1} + (m-1) \al t^m + \ldots \\
&\equiv  (1 + t^{m-1}) (1 + (m-1)\alpha t^m)  \pmod{U_{m+1}} \\
&\equiv (1 + t^{m-1})(1 + \eta t^2)^p  \; \; \textrm{ where } \eta^p = (m-1) \alpha \pmod{U_{m+1}}\\
&\equiv E_{m-1} E_2^{\eta p} \pmod{U_{m+1}}, 
\end{align*}
and therefore, modulo $p^2$,
\begin{align*}
\ub{u} \chi(E_{m-1}(t)) &= p. a_{m-1} + p. \eta( x_2 + p. a_2) \\
&= p (a_{m-1} + \eta x_2).
\end{align*}
\end{enumerate}
\end{proof}
\noindent As noted in the introduction, only some of the coefficients in the standard expansion of characters together with a relation among them,  determine the character up to strict equivalence.  Accordingly we collect these coefficients into \textit{indicator} of the character $\chi$: 
\begin{defn}\label{ind_defn}
Let $\chi$ be a character of type $\langle 2, m \rangle$, and $ \chi = x_1 \FZ_1 + x_2 \FZ_2 + \sum_{\substack{1 \leq j \leq m \\ p \nmid j}} a_j. p \FZ_j$  be its standard expansion . The \textit{indicator} of $\chi$, denoted $\ind \chi$, is defined as follows:
\begin{enumerate}[(i)]
\item
If $m \equiv 0 \pmodp$,  so that $m = 2p$, then
$$
\ind \chi = [x_2,\frac{x_1}{x_2} -  \frac{ a_{m-1}^p}{(m-1)x_2^p}].
$$
\item
If $m \not \equiv 0,1 \pmodp$, then 
$$
\ind \chi = [ x_2, a_m, \frac{x_1}{x_2} -  \frac{ a_{m-1}}{(m-1)a_m}].
$$
\item
If $m \equiv 1 \pmodp$, then 
$$
\ind \chi = [x_2, a_m].
$$
\end{enumerate}
\end{defn}

Let us denote the last coordinate of $\ind \chi$ by $\ind_0 \chi$.

\begin{lem}\label{nec_lem}
Let $\chi, \psi$ be two characters of type $\langle 2, m \rangle$ with	 standard expansions as in equation \eqref{sta_exp}.
Suppose $\psi = \ub{u} \chi$ for some $u(t) = t(1 + \alpha t + \beta t^2 \ldots) \in \notty$.. Then $\ind_0 \ub{u} \chi = \ind_0 \psi$.
\end{lem}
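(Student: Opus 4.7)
The approach is purely computational: Lemma \ref{coarse} already tells us how the coefficients $x_1, x_2, a_{m-1}, a_m$ of the standard expansion transform when we apply the action of $u(t) = t(1+\alpha t + \beta t^2 + \ldots) \in \notty$. Crucially, only the scalar $\alpha$ enters into these transformations (the higher coefficients $\beta, \ldots$ do not affect the evaluations at $E_1, E_2, E_{m-1}, E_m$ modulo the relevant power of $p$). So the plan is to read off from Lemma \ref{coarse} the new values of these coefficients in $\ub{u}\chi$, plug them into the formula for $\ind_0$ in each of the three cases of Definition \ref{ind_defn}, and verify that the $\alpha$-dependent corrections cancel.

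In the generic case $m \not\equiv 0,1 \pmod p$, Lemma \ref{coarse} parts (\ref{12}), (\ref{m}), and (\ref{m-1}) give that, if $\ub{u}\chi$ has standard expansion with coefficients $x_1', x_2', a_{m-1}', a_m'$, then $x_2' = x_2$, $a_m' = a_m$, $x_1' = x_1 + \alpha x_2$, and $a_{m-1}' = a_{m-1} + (m-1)\alpha a_m$. A direct substitution yields
\[
\frac{x_1'}{x_2'} - \frac{a_{m-1}'}{(m-1)a_m'} = \frac{x_1}{x_2} + \alpha - \frac{a_{m-1}}{(m-1)a_m} - \alpha = \ind_0 \chi,
\]
so that $\ind_0 \ub{u}\chi = \ind_0 \chi = \ind_0 \psi$ as desired. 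The case $m \equiv 1 \pmod p$ is essentially immediate: here $p \mid m-1$ so $a_{m-1}$ does not appear in the standard expansion, and $\ind_0 \chi = a_m$, which is preserved by Lemma \ref{coarse}(\ref{m}).

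The only case that requires a genuine idea is $m \equiv 0 \pmod p$, i.e.\ $m = 2p$. Here Lemma \ref{coarse}(\ref{meq_0}) gives $a_{m-1}' = a_{m-1} + \eta x_2$ with $\eta^p = (m-1)\alpha$, while $x_1' = x_1 + \alpha x_2$ and $x_2' = x_2$ as before. The key step is to use the Frobenius identity $(a_{m-1} + \eta x_2)^p = a_{m-1}^p + \eta^p x_2^p$, valid in characteristic $p$, which converts the correction $\eta x_2$ into the correction $\eta^p x_2^p = (m-1)\alpha \, x_2^p$ after raising to the $p$-th power. Then
\[
\frac{x_1'}{x_2'} - \frac{(a_{m-1}')^p}{(m-1)(x_2')^p} = \frac{x_1}{x_2} + \alpha - \frac{a_{m-1}^p}{(m-1)x_2^p} - \frac{(m-1)\alpha}{m-1} = \ind_0 \chi,
\]
completing the verification. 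The slight subtlety, and the reason the formula for the indicator involves $a_{m-1}^p$ rather than $a_{m-1}$ in this case, is precisely that $\eta$ is defined only up to a $p$-th root, so only its $p$-th power makes sense in $\F_p$; the Frobenius identity is exactly what makes the definition of $\ind_0$ well-posed and invariant.
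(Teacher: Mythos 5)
Your proposal is correct and follows essentially the same route as the paper: both read off from Lemma \ref{coarse} how $x_1, x_2, a_{m-1}, a_m$ transform under $u$ (noting only $\alpha$ matters), and both verify by direct algebra that the $\alpha$-dependent corrections cancel in $\ind_0$, including the same Frobenius identity $(a_{m-1}+\eta x_2)^p = a_{m-1}^p + \eta^p x_2^p$ in the $m \equiv 0 \pmod p$ case. The paper merely packages the computation as equating two expressions for $\alpha$ rather than substituting forward, which is algebraically the same argument.
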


\begin{proof}
There is nothing to show in the case where  $m \equiv 1 \pmodp$. Consider the case where $m \not \equiv 0,1 \pmod{p}$. Since 
\begin{align*}
E_{m-1} \circ u(t) &= 1 + t^{m-1}(1 + \alpha t + \beta t^2 + \ldots ) =  1 + t^{m-1} + \alpha t^m + \beta t^{m+1}+ \ldots \\
&= (1 + t^{m-1})(1 + t^m)^{\alpha} \ldots	= E_{m-1}(t) E_m^{\alpha}(t) 
\end{align*}
we see that, modulo $p^2$,
\begin{align*}
\psi(E_{m-1}(t)) &= \ub{u} \chi(E_{m-1}(t)) \\
&= \chi( E_{m-1} \circ u(t)) = \chi( E_{m-1}(t) E_m^\alpha(t)) \\
&= p. a_{m-1} + p \alpha a_m, 
\end{align*}
and hence $\alpha = (b_{m-1} - a_{m-1})/(m-1) a_m$. From the relations \eqref{12}, \eqref{m} of Lemma \ref{coarse}, it follows that $y_2 = x_2$, $a_m = b_m, y_1 = x_1 + \alpha x_2$, and consequently  we have
\begin{align*}
\frac{x_1}{x_2} - \frac{a_{m-1}}{(m-1)a_m} &=  \frac{x_1 - y_1}{x_2} + \frac{y_1}{x_2} - \frac{b_{m-1}}{(m-1)a_m} + \frac{b_{m-1} - a_{m-1}}{(m-1)a_m}  \\
&= - \alpha + \frac{y_1}{y_2} - \frac{b_{m-1}}{(m-1)a_m} + \alpha =\frac{y_1}{y_2} - \frac{b_{m-1}}{(m-1)b_m}.
\end{align*}

Now consider the case $m \equiv 0 \pmod{p}$. From Lemma \ref{coarse} \eqref{meq_0}, and the hypothesis that $\psi = {}_u \chi$, it follows that $b_{m-1} = a_{m-1} + \eta x_2$ where $ \eta = (b_{m-1}-a_{m-1})/x_2$ and $\eta^p = (m-1) \alpha$ so that $\alpha = (b_{m-1}^p - a_{m-1}^p)/(m-1) x_2^p$. As previously, $y_1 = x_1 + \alpha x_2$. Equating these expressions for $\alpha$ and rearranging the terms we obtain the desired conclusion. 
\end{proof}

\section{Classification of weak equivalence classes}
\label{main_section}

In this section we prove the main result of the article. Let us fix characters $\chi, \psi$ of type $\langle 2, m \rangle$ with standard expansions
\begin{equation}\label{exp}
\chi = x_1 \FZ_1 + x_2 \FZ_2 + \sum_{\substack{1 \leq j \leq m \\ p \nmid j}} a_j. p \FZ_j, \; \; 
\psi = y_1 \FZ_1 + y_2 \FZ_2 + \sum_{\substack{1 \leq j \leq m \\ p \nmid j}} b_j. p \FZ_j.
\end{equation}

\begin{thm}\label{my_main_thm}
Let $\chi, \psi$ be characters of type $\langle 2, m \rangle$.  Then $\chi \sim \psi$ if and only if  $\ind \psi = \ind \chi$.
\end{thm}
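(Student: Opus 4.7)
The necessity direction is a routine assembly of earlier results. If $\psi = \ub{u}\chi$ with $u(t) = t(1 + \alpha t + \cdots) \in \notty$, then Lemma~\ref{coarse}(a) gives $y_2 \equiv x_2 \pmod{p}$ and Lemma~\ref{coarse}(b) gives $b_m = a_m$ when $p \nmid m$; Lemma~\ref{nec_lem} handles the last coordinate $\ind_0$. Assembling these equalities yields $\ind \chi = \ind \psi$ in each of the three cases of Definition~\ref{ind_defn}.

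For sufficiency, assuming $\ind \chi = \ind \psi$, I would construct $u \in \notty$ realising $\psi = \ub{u}\chi$ by a two-stage procedure. In \emph{Stage~1} apply the Tschirnhausen-style substitution $u_0(t) := t(1 + \alpha t)$ with $\alpha \in \F_p$ chosen to match the coefficient at $E_{m-1}$: in the generic case $m \not\equiv 0, 1 \pmod{p}$, Lemma~\ref{coarse}(c) dictates $\alpha = (b_{m-1} - a_{m-1})/((m-1)a_m)$; for $m \equiv 0 \pmod{p}$ one uses Lemma~\ref{coarse}(d); and for $m \equiv 1 \pmod{p}$ the index $m-1$ is divisible by $p$ and imposes no constraint, so $\alpha$ is chosen so that $x_1 + \alpha x_2 = y_1$ in $\F_p$. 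The hypothesis $\ind_0 \chi = \ind_0 \psi$ is precisely the algebraic identity saying that the $\alpha$ so determined simultaneously matches the coefficient at $E_1$ modulo $p$ via Lemma~\ref{coarse}(a). Writing $\chi' := \ub{u_0}\chi$, we then have agreement with $\psi$ at $E_1$ and $E_2$ modulo $p$, and full agreement in $\Z/p^2\Z$ at $E_m$ and, in cases (i)--(ii), at $E_{m-1}$.

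In \emph{Stage~2} I would run a backward induction over the indices $j$ with $p \nmid j$ and $j < m - 1$, at each step applying $u_j(t) := t(1 + \gamma_j t^{m-j})$. Expanding $E_i \circ u_j = 1 + t^i + i\gamma_j t^{i+m-j} + \cdots$ shows that $u_j$ fixes $\chi'(E_i)$ for every $i > j$ (its first correction lies in degree $i + m - j > m$), shifts $\chi'(E_j)$ by $j\gamma_j \chi'(E_m)$, and shifts $\chi'(E_i)$ for $i < j$ by elements of $p\Z/p^2\Z$. Since $\chi'(E_m)$ equals $p a_m$ when $p \nmid m$ and equals $\chi'(E_2^p) = p x_2$ when $m = 2p$, and since $j$, $a_m$, and $x_2$ are units in $\F_p$, a unique $\gamma_j \in \F_p$ equates the $p$-part of $\ub{u_j}\chi'(E_j)$ with that of $\psi(E_j)$. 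The mod $p$ residues at $E_1, E_2$ established in Stage~1 are preserved because every Stage~2 correction lies in $p\Z/p^2\Z$, while the collateral shifts at lower indices are absorbed into later steps of the induction. The composition of $u_0$ with the successive $u_j$'s is the required element of $\notty$.

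The main obstacle is the bookkeeping when the ancillary index $i + m - j$ (or $m$ itself in case~(i)) is divisible by $p$: the corresponding values must be interpreted through the identity $E_{ps} = E_s^p$, and one must verify that the attendant higher-order terms vanish in $\Z/p^2\Z$. The three-way split of Definition~\ref{ind_defn} mirrors exactly this $p$-divisibility phenomenon for $m$ and for $m-1$, which is why the construction naturally divides along the same case lines.
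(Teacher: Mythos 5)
Your proposal is correct and follows essentially the same route as the paper: necessity from Lemma~\ref{coarse} and Lemma~\ref{nec_lem}, and sufficiency by first applying $t(1+\alpha t)$ with $\alpha$ pinned down by the $E_{m-1}$ (or, for $m\equiv 1 \pmod p$, the $E_1$) coefficient so that the hypothesis $\ind_0\chi=\ind_0\psi$ forces simultaneous agreement at $E_1$, then running the same backward induction with substitutions $t(1+\gamma_j t^{m-j})$, whose collateral effects on lower indices are absorbed by later steps. Your explicit remark that the Stage~2 corrections lie in $p\Z/p^2\Z$ and hence preserve the mod-$p$ data at $E_1,E_2$ is a point the paper handles by treating those two indices separately at the end, but the argument is the same.
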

\begin{proof}

Let $\chi, \psi$ have standard expansions as in equation \eqref{exp}. Necessity of the condition follows from the already established results. Indeed, from Lemma \ref{coarse} \eqref{12} it follows that $a_2 = b_2$ in all cases and  $a_m = b_m$ if $m \not \equiv 0 \pmodp$. The remaining equality that $\ind_0 \chi = \ind_0 \psi$ follows from  Lemma \ref{nec_lem}. 

The hard part is to prove the sufficiency of the condition, namely that $\ind \chi = \ind \psi$  implies $\chi \sim \psi$. First observe that Lemma \ref{coarse} can be applied with $u = t(1 + \alpha t + \beta t^2)$, for some $\alpha, \beta \in \F_p$,  since terms of degree $\geq 4$ in $t$ do not affect the conclusions of the lemma. The strategy of the proof is to use backwards induction, beginning with index $m-1$ in the case where $m \not \equiv 0,1 \pmodp$, with index $m-2$ in the case where $m \equiv 1 \pmodp$, and with index $m-1$ in the case where $m \equiv 0 \pmodp$.

First consider the case where $m \not \equiv 0,1 \pmodp$. By hypothesis $\ind \chi = \ind \psi$, hence in particular $a_m = b_m$. Let $u = t(1 + \alpha t)$ where $\alpha = (b_{m-1}- a_{m-1})/(m-1)a_m$. Then the coefficients of $\ub{u} \chi,\psi$ with index $m-1$, resp. $m$ are equal.  Moreover since $\ind_0 \chi = \ind_0 \psi$, we obtain $$
\alpha  = \frac{y_1 - x_1}{x_2} = \frac{b_{m-1} - a_{m-1}}{(m-1)a_m}.
$$
Consequently by Lemma \ref{coarse}\eqref{12},  $\ub{u} \chi (E_1) = y_1 = \psi(E_1)$ and $\ub{u} \chi(E_2) = y_2 = \psi(E_2)$. Thus, together with the hypothesis that $x_2 = y_2$, we have
$$
\ub{u} \chi = y_1 \FZ_1 + y_2 \FZ_2 + \sum_{ 1 \leq j \leq m-2, p \nmid j} a_j p \FZ_j + b_{m-1} p \FZ_{m-2} + b_{m} p \FZ_m.
$$

We now assume, inductively, that there is an integer $N$, where  $3 < N < m-1$, such that for all $j$, $N \leq j < m $, we have $b_j = a_j$.
 To complete the induction then, we need to construct $w \in \notty$ such that the coefficients of $\ub{w} \chi $ and $\psi$ with index $ 3 \leq j-1 \leq k \leq m, p \nmid k$ are equal. If $p \mid (j-1)$ then the coefficient with index $j-1$ does not occur in the standard expansion of $\chi$, so then there is nothing to prove. Otherwise, let $w = t (1 + \alpha t^{m-(j-1)})$ with $\alpha = (b_{j-1} - a_{j-1})/(j-1) a_m$, then clearly $b_{j-1} = \psi(E_{j-1}(t)) = (\ub{w} \chi)(E_{j-1}(t))$, and the $b_k = a_k$ for all $j-1 \leq k < m $, $p \nmid k$. The induction is complete, and we may assume $\chi$ to be of the following form
$$
\chi = y_1 \FZ_1 + y_2 \FZ_2 +  a_1' p \FZ_1 + a_2' p \FZ_2 + \sum_{ 3 \leq j \leq m, p \nmid j} b_j p \FZ_j 
$$
for some $a_1', a_2' \in \F_p$.  Note that at each stage of the induction after the base case the coefficients $y_1, y_2$  do not change. Furthermore, it is evident that indicator of the characters does not change either.

Now it remains to deal with the coefficients $a_1', a_2'$. Let $u = t(1 + \alpha t^{m-2})$ with $\alpha = (b_2 - a_2')/2 b_m$. Then,  
\begin{align*}
\ub{u} \chi(E_2(t)) &=  \chi(E_2 \circ u(t)) = \chi(1 + t^2(1 + 2 \al t^{m-2})) \\
&= \chi(E_2 . E_m^{2 \al}) = y_2 + p (a_2' + 2 \al b_m) \\
&= y_2 + p b_2 = \psi(E_2(t))
\end{align*}
It remains to consider the coefficient with index $1$ case.   Let $u = t( 1 + \alpha t^{m-1})$ where $\alpha = (b_1 - a_1^{''})/b_m$, where $a_1^{''}$ is the altered coefficient $a_1'$ in $\ub{u} \chi$, where $u$ is the element chosen in the previous step. Then, modulo $p^2$,
\begin{align*}
\ub{u} \chi (E_1 (t) ) &= \chi((1 + t)(1 + t^m)^\alpha) = y_1 + pa_1' + p \alpha b_m \\
&= y_1 + p \Big( a_1 + \frac{b_1 - a_1'}{b_m} b_m \Big) \\
&= y_1 + p. b_1 = \psi(E_1(t)). 
\end{align*}
This completes the proof in the case where $m \not \equiv 0, 1 \pmodp$.

Now consider the case where $m \equiv 1 \pmodp$. Then the basis of induction begins at $m-2$, and we let $u = t(1 + \beta t^2)$ where $\beta = (b_{m-2}-a_{m-2})/ (m-2) a_m$. Consider the composite $U = u \circ (1 + \alpha t)$ where $\alpha = (y_1 - x_1)/x_2$. Then $\ub{U} \chi$ has the following form
$$
\ub{U}\chi = y_1 \FZ_1 + y_2 \FZ_2  + \sum_{ 1 \leq j \leq m-1, p \nmid j} a_j p\FZ_j + b_m p \FZ_m. 
$$
Now we proceed as in the previous case to complete the induction. 

Finally consider the case where $m \equiv 0 \pmodp$. As already noted, we have $m = 2p$. In this case, the basis of induction begins at index $m-1$, and by Lemma \ref{coarse} \eqref{meq_0} the basis step holds for appropriate $\alpha$, and the induction proceeds as in the $m \not \equiv 0,1 \pmodp$ case. This finishes the proof the sufficiency of the condition $\ind \chi = \ind \psi$ to imply that $\chi$ is weakly equivalent to $\psi$.
\end{proof}

\begin{cor}\label{weak_corollary}
Let $d_m^{\textrm{weak}}$ be the number of weak equivalence classes of characters of type $\langle 2, m \rangle$. Then,
\[ 
d_m^{\textrm{weak}} = 
\begin{cases}
p(p-1)	 & \; \textrm{if} \; m \equiv 0 \pmodp. \\
(p-1)^2	 & \; \textrm{if}   \; m \equiv 1 \pmodp.\\
p(p-1)^2	 & \; \textrm{otherwise}. \\
\end{cases}
\]  
\end{cor}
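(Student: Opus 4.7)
The plan is to use Theorem \ref{my_main_thm} to reduce the counting problem to counting the possible values of the indicator $\ind \chi$, and then to note that the indicator map is surjective onto the natural target set in each case. By Theorem \ref{my_main_thm}, the assignment $\chi \mapsto \ind \chi$ descends to an \emph{injective} map from the set of weak equivalence classes of characters of type $\langle 2, m \rangle$ into the appropriate product of copies of $\F_p^*$ and $\F_p$ determined by Definition \ref{ind_defn}. Thus it suffices to (i) identify the target, and (ii) verify that every tuple in the target is attained by some character.

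First I would identify the target. If $m \equiv 0 \pmodp$ (so $m = 2p$), then $\ind\chi$ consists of a pair $[x_2,\lambda]$ with $x_2 \in \F_p^*$ and $\lambda \in \F_p$, giving at most $p(p-1)$ possibilities. If $m \not\equiv 0,1 \pmodp$, then $\ind\chi = [x_2,a_m,\lambda]$ with $x_2,a_m \in \F_p^*$ and $\lambda \in \F_p$, giving at most $p(p-1)^2$. If $m \equiv 1 \pmodp$, then $\ind\chi = [x_2,a_m]$ with $x_2,a_m \in \F_p^*$, giving at most $(p-1)^2$. In each case note that the constraints $x_2 \neq 0$ (always) and $a_m \neq 0$ (when $p \nmid m$) come directly from the conditions on coefficients of a character of type $\langle 2,m \rangle$ recorded after the standard expansion in \S \ref{str_char}.

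Next I would verify surjectivity of the indicator map onto the target described above. For any given tuple, one simply constructs an explicit character of type $\langle 2, m \rangle$ realizing it, by setting most coefficients equal to zero. For instance in the case $m \not\equiv 0,1 \pmodp$, given $[x_2,a_m,\lambda]$, take $a_{m-1} = 0$, $a_j = 0$ for $3 \leq j \leq m-2$, $a_2 = 0$, $a_1 = 0$, and $x_1 = \lambda x_2$; then the character
\[
\chi = \lambda x_2 \, \FZ_1 + x_2 \, \FZ_2 + a_m \, p\, \FZ_m
\]
is of type $\langle 2,m \rangle$ and has $\ind \chi = [x_2,a_m,\lambda]$. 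The cases $m \equiv 0 \pmodp$ and $m \equiv 1 \pmodp$ are handled analogously: set $a_{m-1} = 0$ and choose $x_1 = \lambda x_2$, resp.\ any $x_1$, to obtain the prescribed indicator.

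Combining injectivity from Theorem \ref{my_main_thm} with surjectivity just verified, the number of weak equivalence classes equals the cardinality of the target in each case, which yields the stated values of $d_m^{\mathrm{weak}}$. There is no real obstacle here beyond checking that the indicator constraints are no more restrictive than those already imposed on the defining coefficients of a character of type $\langle 2, m \rangle$; this is immediate from the definition of the standard expansion.
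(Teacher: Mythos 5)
Your proposal is correct and follows essentially the same route as the paper: reduce to counting indicators via Theorem \ref{my_main_thm}, then observe that the coordinates $x_2$, $a_m$, and the last entry can be prescribed independently subject only to $x_2, a_m \in \F_p^*$. Your explicit construction of a character realizing each indicator value merely spells out what the paper asserts as immediate.
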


\begin{proof}
Recall that the only break sequence with $p \mid m$ is $\langle 2, 2p \rangle$. The corollary follows immediately from Definition \ref{ind_defn}, Theorem \ref{my_main_thm}, and the properties of the coefficients $x_1, x_2$ and $a_j$ for $1 \leq j \leq  m$, $p \nmid j$  [refer to the discussion before Lemma \ref{coarse}], namely that $x_2, a_m \in \{ 1, \ldots, p-1 \}$, and $x_1 \in \{0, \ldots, p-1 \}$: these coordinates of the indicator can be chosen independent of each other.
\end{proof}

\section{Classification of strict equivalence classes}\label{classification_strict_classes}

\noindent Throughout the section, we consider characters $\chi, \psi$ of type $\langle 2, m \rangle$ with standard expansions as in equation \eqref{exp}. Consider the following subset of $\notty$: 
\begin{equation*}\label{x1x2_set}
\notty(x_1,x_2) := \Set{ u = t(1 + \alpha t + \beta t^2 + \ldots) \in \notty | \frac{x_1}{x_2} \alpha - {\alpha \choose 2} + \beta  \equiv 0 \pmodp.}
\end{equation*}
\noindent Note that the set is well-defined because $x_2 \in \F_p$ is nonzero.
\begin{prop}\label{new_addition}
The set $\F_p \times \F_p$ equipped with the operation $\oplus$ defined by
$$
(a,b) \oplus (c,d) = (a+c, b+ d + 2 ac)
$$
is a finite abelian group, where the $+$ on the right hand side of the equality is the usual addition in $\F_p$. The map $\Phi : \notty \to (\F_p \times \F_p, \oplus)$ defined by
$$
u = t(1 + \alpha t + \beta t^2 + \ldots) \mapsto  (\alpha, \beta)
$$ is a surjective group homomorphism. Furthermore,
\begin{enumerate}[(i)]
  \item \label{conjugate_invariance}
$\notty(x_1,x_2)$  contains $\Ker \Phi$, and is invariant under conjugation in $\notty$.
\item \label{notty_decomposition}
 $\notty = \mathop{\bigcup}_{k=0}^{p-1} \notty(x_1,x_2) g(k)$, a disjoint union.
\end{enumerate}
\end{prop}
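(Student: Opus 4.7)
The plan is to verify the four assertions in turn, with the only non-trivial ingredient being a direct expansion of power-series composition through degree three. For the abelian group structure on $\F_p \times \F_p$, commutativity is immediate from the formula, $(0,0)$ is the identity, $(-a,\, -b + 2a^2)$ is the inverse of $(a,b)$, and associativity will come by expanding both $((a,b) \oplus (c,d)) \oplus (e,f)$ and $(a,b) \oplus ((c,d) \oplus (e,f))$ and checking that each equals $(a+c+e,\; b+d+f+2ac+2ae+2ce)$.

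For the homomorphism property, I will expand $(u \circ v)(t) = u(v(t))$ with $u(t) = t + \alpha t^2 + \beta t^3 + \cdots$ and $v(t) = t + \gamma t^2 + \delta t^3 + \cdots$, keeping only terms through $t^3$; this yields
\[
(u \circ v)(t) \equiv t + (\alpha + \gamma)\, t^2 + (\beta + \delta + 2\alpha \gamma)\, t^3 \pmod{t^4},
\]
which matches $\Phi(u) \oplus \Phi(v)$ on the nose. Surjectivity of $\Phi$ is witnessed by $(\alpha, \beta) \mapsto t(1 + \alpha t + \beta t^2)$.

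For part (i), since the defining condition of $\notty(x_1, x_2)$ depends only on the coefficients $\alpha$ and $\beta$, we have $\notty(x_1, x_2) = \Phi^{-1}(S)$ for the subset
\[
S = \Set{(\alpha, \beta) \in \F_p \times \F_p | \frac{x_1}{x_2} \alpha - {\alpha \choose 2} + \beta \equiv 0 \pmodp}.
\]
Because $(0,0) \in S$, we obtain $\Ker \Phi \subset \notty(x_1, x_2)$ for free. Since the target $\F_p \times \F_p$ is abelian, $\Phi$ kills commutators, so the conjugation action of $\notty$ on itself descends to the trivial action downstairs, giving $\Phi(v u v^{-1}) = \Phi(u)$ for any $v \in \notty$ and $u \in \notty(x_1, x_2)$; this yields the required conjugation-invariance.

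For (ii), the key observation is that $|S| = p$: for each $\alpha \in \F_p$ exactly one $\beta$ solves the defining equation, namely $\beta = {\alpha \choose 2} - \frac{x_1}{x_2}\alpha$. I will take $g(k) := t(1 + k t^2)$, so that $\Phi(g(k)) = (0, k)$, and check directly that as $k$ runs through $\F_p$ the translates $S \oplus (0, k) = \Set{(\alpha,\; {\alpha \choose 2} - \frac{x_1}{x_2}\alpha + k) | \alpha \in \F_p}$ partition $\F_p \times \F_p$: a given $(\alpha_0, \beta_0)$ lies in the unique shift indexed by $k = \beta_0 - {\alpha_0 \choose 2} + \frac{x_1}{x_2}\alpha_0$. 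Pulling this back through $\Phi$ and using $\Ker \Phi \subset \notty(x_1, x_2)$ delivers the disjoint union $\notty = \bigcup_{k=0}^{p-1} \notty(x_1, x_2)\, g(k)$. The main obstacle will be the degree-three composition underpinning the homomorphism formula; once that is in hand, the remaining assertions drop out formally, conjugation-invariance from abelianness of the target and the coset decomposition from the fact that $S$ is the graph of a function of $\alpha$.
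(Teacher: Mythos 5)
Your proposal is correct and follows essentially the same route as the paper: the degree-three composition formula making $\Phi$ a homomorphism, abelianness of the target for conjugation-invariance, and the elements $g(k)=t(1+kt^2)$ for the coset decomposition. Your phrasing of part (ii) entirely downstairs — identifying $\notty(x_1,x_2)=\Phi^{-1}(S)$ with $S$ the graph of $\alpha\mapsto\binom{\alpha}{2}-\frac{x_1}{x_2}\alpha$ and partitioning $\F_p\times\F_p$ by its vertical translates — is a slightly cleaner packaging of the paper's direct computation with $u\circ g(c)^{-1}$, but it is the same underlying argument.
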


\newcommand{\ol}[1]{\overline{#1}}
\begin{proof}
It is straightforward to verify that $(F_p \times F_p, \oplus)$ is an abelian group; we just note that the identity element is $(0,0)$ and the inverse of $(a,b)$ is $(-a, -b + 2 a^2)$. We now show that $\Phi$ is a group homomorphism. Clearly the identity element $t \in \notty$ is mapped to the identity element $(0,0)$ of $(\F_p \times \F_p, \oplus)$. Let  $u = t(1 + a t + bt^2 + \ldots ), v = t(1 + ct + d t^2 + \ldots) \in \notty$. Then  $u \circ v = t( 1 + (a + c) t + (b + d + 2 ac)t^2 + \ldots )$, so that 
$$
\Phi(u \circ v) = (a+c, b+d + 2 ac) = (a,b) \oplus (c,d) = \Phi(u) \oplus \Phi(v).
$$
(In fact, the composition $\oplus$ on $\F_p \times \F_p$ is defined in such a way that $\Phi$ turns out to be a group homomorphism.) The remaining assertions, except perhaps \eqref{notty_decomposition}, are straightforward and may be seen immediately with the aid of the map $\Phi$ together with the fact that the group $\notty/\Ker \Phi \cong (\F_p \times \F_p, \oplus)$ is abelian. 

We now prove assertion \eqref{notty_decomposition}. First, observe that for $u = t(1 + a t + b t^2 + \ldots) \in \notty(x_1,x_2)$, if $a \equiv 0 \pmodp$, then $b \equiv 0 \pmodp$ so that $u \in \ker(\Phi)$. In other words, if $a = 0$ and $b \neq 0$ then $u \not \in \notty(x_1,x_2)$. Therefore, we consider  elements $g(k) := t(1 + kt^2) \in \notty \setminus \notty(x_1,x_2)$, for all $1 \leq k \leq p-1$. Now, for $u\in \notty$ as above, define $c = \frac{x_1}{x_2} a - {a \choose 2} + b \in \F_p$. Then
$$
\Phi(  u \circ g(c)^{-1}) =  \Phi(u) \oplus \Phi(g(c))^{-1}
=  (a,b) \oplus (0,-c) = (a, b-c),
$$  
and since $ \Big( \frac{x_1}{x_2} a - {a \choose 2} \Big)+ (b-c)  = \Big( \frac{x_1}{x_2}a + b - {a \choose 2} \Big) - c=  0$, $ u  \circ g(c)^{-1} \in \notty(x_1,x_2)$, whence $u \in \notty(x_1,x_2) g(c)$. It remains to show that the union is disjoint. Indeed, suppose  $u \circ g(k) = v \circ g(l)$, for some $u = t(1 + at +bt^2 + \ldots)$,  $v = t(1 + ct + dt^2 + \ldots) \in \notty(x_1,x_2)$, where $0 \leq k, l \leq p-1$. Then $\Phi(u) \oplus \Phi(g(k)) =  \Phi(v) \oplus \Phi(g(l))$, hence $(0,k) \oplus (a,b) = (0,l) \oplus (c,d)$, consequently $(a, k-l + b) = (c,d)$, whence $a = c$ and $k-l = d-b$. Since $v \in \notty(x_1,x_2)$ we have $\frac{x_1}{x_2} c - {c \choose 2} + d \equiv 0 \pmodp$, equivalently $\big( \frac{x_1}{x_2} a - {a \choose 2} + b \big) + k-l \equiv 0 \pmodp$ since $u \in \notty(x_1,x_2)$,  hence $k = l$.
\end{proof}

\begin{lem}\label{second_condition_lemma}
Suppose $\ind \chi = \ind \psi$, so that $\chi \sim \psi$ by Theorem \ref{my_main_thm}, i.e. $\psi = \ub{u} \chi$ for some $u \in \notty$. Suppose among \textit{such} $u \in \notty$, there exists $u \in \notty(x_1,x_2)$. Then  $\chi \simeq \psi$. Conversely, if $\chi \simeq \psi$,  there exists $u \in \notty(x_1,x_2)$ such that $\psi = \ub{u} \chi$ (and therefore $\ind \chi = \ind \psi$ by Theorem \ref{my_main_thm}).
\end{lem}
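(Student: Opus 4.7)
The plan is to identify $\notty(x_1,x_2)$ with the set $\{u \in \notty : \chi(u(t)/t) \equiv 0 \pmodp\}$, after which both directions of the lemma drop out of Lemma \ref{lubin_lemma} (Lubin's reformulation of strict equivalence) and the definition of strict equivalence itself.

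To carry out this identification, I would take a general $u = t(1 + \alpha t + \beta t^2 + \cdots) \in \notty$ and expand $u(t)/t$ in the canonical topological basis $\{E_j\}_{p \nmid j}$ of $U_1$. Since $\chi(E_j) \in p\Z/p^2\Z$ for every $j \geq 3$ with $p \nmid j$, only the $E_1$- and $E_2$-exponents, taken modulo $p$, can contribute to $\chi(u(t)/t)\pmodp$. Writing $u(t)/t \equiv E_1^{c_1} E_2^{c_2} \pmod{U_3}$ and matching Taylor coefficients in
\[
(1+t)^{c_1}(1+t^2)^{c_2} \equiv 1 + c_1 t + \Big({c_1 \choose 2} + c_2\Big) t^2 \pmod{t^3}
\]
against $1 + \alpha t + \beta t^2 + \cdots$, one reads off $c_1 \equiv \alpha$ and $c_2 \equiv \beta - {\alpha \choose 2} \pmodp$ (here ${\alpha \choose 2}$ is well-defined mod $p$ under the convention of \S\ref{not} because $p \neq 2$ in type $\langle 2, m \rangle$). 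Hence
\[
\chi(u(t)/t) \;\equiv\; x_1 \alpha + x_2\Big(\beta - {\alpha \choose 2}\Big) \;=\; x_2\Big(\tfrac{x_1}{x_2}\alpha - {\alpha \choose 2} + \beta\Big) \pmodp,
\]
and since $x_2 \in \F_p^\times$ this vanishes exactly when $u$ satisfies the defining congruence of $\notty(x_1,x_2)$.

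With the identification in hand, both implications follow. For the forward direction, if $\psi = \ub{u}\chi$ with $u \in \notty(x_1,x_2)$, the displayed equality gives $\chi(u(t)/t) \equiv 0 \pmodp$, and Lemma \ref{lubin_lemma} yields $\chi \simeq \psi$. For the converse, Definition \ref{secdefn} directly supplies $u$ with $\psi = \ub{u}\chi$ and $u(t)/t \in \ker\chi$, so $\chi(u(t)/t) = 0$; a fortiori $\chi(u(t)/t) \equiv 0 \pmodp$, so $u \in \notty(x_1,x_2)$. The only real obstacle is the bookkeeping step of verifying that the higher-index basis contributions drop out modulo $p$; this is immediate from the standard expansion of $\chi$, and is precisely what motivates the otherwise mysterious-looking definition of the subset $\notty(x_1,x_2)$.
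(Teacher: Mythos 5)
Your proposal is correct and follows essentially the same route as the paper: both reduce the lemma to the equivalence of $\chi(u(t)/t) \equiv 0 \pmodp$ with the defining congruence of $\notty(x_1,x_2)$, by writing $u(t)/t \equiv (1+t)^{\alpha}(1+t^2)^{\beta - {\alpha \choose 2}} \pmod{U_3}$ and noting that the higher-index coefficients of $\chi$ lie in $p\Z/p^2\Z$, then invoking Lemma \ref{lubin_lemma} for one direction and Definition \ref{secdefn} for the other. Your write-up is just a more explicit version of the paper's one-line computation.
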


\begin{proof}
By Definition \ref{secdefn} and Lemma \ref{lubin_lemma}, it follows that $\chi \simeq \psi$ if and only if there exists $u \in \notty$ such that $\psi = \ub{u} \chi$ and $\chi(u(t)/t) \equiv 0 \pmodp$. Both the implications of the lemma follow, based on their respective hypothesis, if we show that the condition $\chi(u(t)/t) \equiv 0 \pmod{p}$ is \textit{equivalent} to the condition $\frac{x_1}{x_2} \alpha  + \beta - {\alpha \choose 2 }) \equiv 0 \pmodp$. But this is immediate, since $u(t)/t = 1 + \alpha t + \beta t^2 + \ldots = (1 + t)^\al ( 1 + t^2)^{\beta - {\alpha \choose 2}} \ldots$ so that 
$
\chi(u(t)/t) \equiv  x_1 \alpha + x_2 \Big(\beta - {\alpha \choose 2}\Big) \pmodp.
$
Since $x_2 \in \F_p$ is nonzero, the desired equivalence follows.
\end{proof}

\begin{thm}\label{final_thm}
Let $d_m$ denote the number of conjugacy classes of Nottingham elements of order $p^2$ and type $\langle 2, m \rangle$. Then
\[
d_m^{\textrm{weak}} \leq d_m \leq p d_m^{\textrm{weak}}
\]
where $d_m^{\textrm{weak}}$ is as defined in Corollary \ref{weak_corollary}.
\end{thm}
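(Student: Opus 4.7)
The plan is to translate the problem into counting strict equivalence classes via Lubin's correspondence (Theorem \ref{main_thm}), and then bound the number of strict classes inside each weak equivalence class by $p$, using the coset decomposition $\notty = \bigcup_{k=0}^{p-1} \notty(x_1,x_2) g(k)$ from Proposition \ref{new_addition}\eqref{notty_decomposition} together with the strict-equivalence criterion in Lemma \ref{second_condition_lemma}.

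The lower bound is immediate: by Theorem \ref{main_thm}, $d_m$ is the number of strict equivalence classes of characters of type $\langle 2, m \rangle$; since strict equivalence refines weak equivalence, the partition into strict classes refines the partition into weak classes, hence $d_m \geq d_m^{\textrm{weak}}$.

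For the upper bound, I fix a weak equivalence class $W$ and show it contains at most $p$ strict equivalence classes. Choose $\chi \in W$ with leading coefficients $x_1, x_2$ and, for each $k \in \{0, 1, \ldots, p-1\}$, set $\chi_k := \ub{g(k)} \chi$ where $g(k) = t(1 + k t^2)$ is as in Proposition \ref{new_addition}. Applying Lemma \ref{coarse}\eqref{12} with $\alpha = 0, \beta = k$ shows that each $\chi_k$ has the same leading coefficients $x_1, x_2$ as $\chi$, so the set $\notty(x_1, x_2)$ is the same whether defined with respect to $\chi$ or to any $\chi_k$. For an arbitrary $\psi \in W$, write $\psi = \ub{u} \chi$ for some $u \in \notty$, and decompose $u = u' \circ g(k)$ with $u' \in \notty(x_1, x_2)$ and $0 \leq k \leq p-1$ via Proposition \ref{new_addition}\eqref{notty_decomposition}. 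Using the identity $\ub{u' \circ g(k)} \chi = \ub{u'}(\ub{g(k)} \chi)$ we obtain $\psi = \ub{u'} \chi_k$ with $u'$ lying in $\notty(x_1^{(k)}, x_2^{(k)}) = \notty(x_1, x_2)$, so Lemma \ref{second_condition_lemma} yields $\chi_k \simeq \psi$. Hence every $\psi \in W$ is strictly equivalent to one of the characters $\chi_0, \ldots, \chi_{p-1}$, so $W$ splits into at most $p$ strict classes. Summing over the $d_m^{\textrm{weak}}$ weak classes gives $d_m \leq p \, d_m^{\textrm{weak}}$.

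The only delicate point is the invariance of the leading coefficients under the translates $\chi \mapsto \chi_k$, since this is what allows the single set $\notty(x_1, x_2)$ supplied by the coset decomposition to also serve as the set required by Lemma \ref{second_condition_lemma} in each intermediate strict equivalence $\chi_k \simeq \psi$; this reduces to Lemma \ref{coarse}\eqref{12} with $\alpha = 0$. Aside from keeping the composition order of the action $\ub{w \circ w'} \chi = \ub{w}(\ub{w'} \chi)$ straight, I do not foresee any further obstacle, as the machinery in Section \ref{str_char} and Section \ref{main_section} has been engineered precisely so that the bound drops out of this coset decomposition.
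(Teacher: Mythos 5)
Your proof is correct and follows essentially the same route as the paper: reduce to counting strict classes via Theorem \ref{main_thm}, then use the coset decomposition $\notty = \bigcup_{k} \notty(x_1,x_2) g(k)$ of Proposition \ref{new_addition} together with Lemma \ref{second_condition_lemma} to show each weak class splits into at most $p$ strict classes represented by the $\ub{g(k)}\chi$. If anything, you are more careful than the paper's own write-up about the composition-order bookkeeping and about checking (via Lemma \ref{coarse}\eqref{12} with $\alpha=0$) that the leading coefficients, and hence the set $\notty(x_1,x_2)$, are unchanged under $\chi \mapsto \ub{g(k)}\chi$.
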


\begin{proof}
The natural action of $\notty$ on the set of characters of type $\langle 1, m \rangle $ yields  $d_m^{\textrm{weak}}$ number of orbits. Consider one such orbit $\notty \cdot \chi$ represented by $\chi$ with  standard expansion as in equation \eqref{exp}; it is precisely the set of characters that are weakly equivalent to $\chi$. On the other hand, by Lemma \ref{second_condition_lemma}, the set $g(0) \notty(x_1,x_2) \chi = \notty(x_1,x_2). \chi$ is precisely the set of characters strictly equivalent to $\chi$. From Proposition \ref{new_addition} the set $g(k) \notty(x_1,x_2) \cdot \chi$ consists of characters strictly equivalent to $\ub{g(k)} \chi$. Finally, assertion \eqref{notty_decomposition} of Proposition \ref{new_addition} implies that the weak equivalence class $\notty \cdot \chi$ is partitioned into \textit{at most} $p$ number of strict equivalence classes; note that the disjointedness of the union in assertion \eqref{notty_decomposition} of Proposition \ref{new_addition} 
is not used.

From Theorem \ref{main_thm}, it follows that the number of conjugacy classes of torsion elements of order $p^2$ and type $\langle 2 ,m \rangle$ is equal to the number of strict equivalence classes of type $\langle 2, m \rangle$; see Definition \ref{secdefn}. The upper bound on the cardinality of the latter set follows from the previous paragraph and Corollary \ref{weak_corollary} .
\end{proof}

\section{Final remarks}

\label{fin_rem}
A complete account of number of strict equivalence classes of arbitrary type $\langle a, m \rangle$ where $a \neq 1,2$ needs to be determined. Moreover, in this article we considered only the case where the base field is a prime field $\F_p$. One may also wonder about the results in the general case of arbitrary finite field.

\section*{Acknowledgments}

I remain grateful to Jonathan Lubin for his patience in responding to numerous emails, and his enthusiasm in clarifying various notions in the theory. Also, I am grateful to the referee for many useful comments and suggestions.

\end{document}